\def\RR{\mathbb R}
\def\R{\mathbb R}
\def\ZZ{\mathbb Z}
\def\QQ{\mathbb Q}
\DeclareMathOperator{\sgn}{sgn}
\newcommand{\set}[1]{\left\lbrace #1\right\rbrace}
\providecommand{\abs}[1]{\left\lvert#1\right\rvert}
\providecommand{\norm}[1]{\left\lVert#1\right\rVert}
\newcommand{\remove}[1]{ }
\newcommand{\qtq}[1]{\quad\text{#1}\quad}
\newtheorem{theorem}{Theorem}[section]
\newtheorem{proposition}[theorem]{Proposition}
\newtheorem{lemma}[theorem]{Lemma}
\newtheorem*{TA}{Theorem A}
\newtheorem*{TB}{Theorem B}
\theoremstyle{remark}
\newtheorem{remark}[theorem]{Remark}
\newtheorem{remarks}[theorem]{Remarks}
\newtheorem{example}[theorem]{Example}
\numberwithin{equation}{section}
\begin{document}
\title{Moving and oblique observations of beams and plates}

\author{Philippe Jaming}
\address{Univ. Bordeaux, IMB, UMR 5251, F-33400 Talence, France.
CNRS, IMB, UMR 5251, F-33400 Talence, France.}
\email{Philippe.Jaming@math.u-bordeaux.fr}

\author{Vilmos Komornik} 
\address{College of Mathematics and Computational Science, Shenzhen Uni- versity, Shenzhen 518060, People’s Republic of China, and Département de mathématique\\
         Université de Strasbourg\\
         7 rue René Descartes\\
         67084 Strasbourg Cedex, France}
\email{komornik@math.unistra.fr}
\thanks{The second author was supported by the grant NSFC No. 11871348.}

\begin{abstract}
We study the observability of the one-dimensional Schr\"odinger equation and of the beam and plate equations by moving or oblique observations.
Applying different versions and adaptations of Ingham's theorem on nonharmonic Fourier series, we obtain various observability and non-observability theorems. 
Several open problems are also formulated at the end of the paper.
\end{abstract}

\thanks{Version of 2019-03-14-c}
\subjclass{Primary: 93B07, Secondary: 74K10, 74K20, 42A99}
\keywords{Schr\"odinger equation, beam, plate, observability, nonharminic Fourier series}

\maketitle

\section{Introduction}

Fourier series methods have been applied for a long time in control theory \cite{Fattorini-Russell-1974, Krabs-1992, Russell-1978,Ball-Slemrod-1979}.
Since Haraux \cite{Haraux-1989} recognized the usefulness of a classical theorem of Ingham \cite{Ingham-1936} in this context, many new results have been obtained by applying multiple variants of Ingham's theorem \cite{Baiocchi-Komornik-Loreti-1999,Baiocchi-Komornik-Loreti-2002,Beurling,Kahane-1962,Komornik-Loreti-2014}.

The purpose of this paper is to investigate the observability of beams and plates by moving or oblique observations.

Moving point observability theorems for parabolic and hyperbolic equations have been obtained earlier by Khapalov by different methods \cite{Khapalov2,Khapalovbook}.

Another motivation for this paper was the following recent result of the first author with K. Kellay \cite{Jaming-Kellay-2018}:

\begin{theorem}
Let $\mu$ be a bounded measure on $\R^2$ and let $u=\widehat{\mu}$ be its Fourier transform. Assume that
$u$ is a solution of the Schr\"odinger equation $\partial_t u(t,x)+i\partial_x^2 u(t,x)=0$ on $\R_+\times\R$ and assume that,
for some $a\not=b\in\R$, $u(t,at)=u(t,bt)=0$ for every $t>0$ then $u=0$.
\end{theorem}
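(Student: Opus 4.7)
The plan is to reduce the problem to a statement about a finite measure on $\R$ (namely the restriction of $\mu$ to the parabola forced by the equation), to turn the two vanishing conditions into two antisymmetry properties of that measure, and to finish by a translation-invariance argument.

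First I would use the PDE to pin down the support of $\mu$. Plugging $u=\widehat\mu$ into $\partial_t u + i\partial_x^2 u=0$ shows that $\mu$ is supported on the parabola $\{\xi=\eta^2\}$, so, up to fixing the Fourier convention,
\[
u(t,x)=\int_{\R}e^{i(t\eta^2+x\eta)}\,d\nu(\eta)
\]
for a bounded measure $\nu$ on $\R$. The problem thus becomes: if $\int e^{it(\eta^2+a\eta)}\,d\nu(\eta)=\int e^{it(\eta^2+b\eta)}\,d\nu(\eta)=0$ for every $t>0$ and $a\neq b$, then $\nu=0$.

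Second, I would complete the square. The change of variable $\eta=s-a/2$ gives
\[
u(t,at)=e^{-ita^2/4}\int_{\R}e^{its^2}\,d\nu_a(s),\qquad d\nu_a(s):=d\nu(s-a/2),
\]
and pushing $\nu_a$ forward by $s\mapsto s^2$ yields a bounded measure $\tilde\nu_a$ on $[0,\infty)$ with
\[
\int_{\R}e^{its^2}\,d\nu_a(s)=\int_0^{\infty}e^{it\sigma}\,d\tilde\nu_a(\sigma).
\]
This last integral extends to a function $F(z)=\int_0^{\infty}e^{iz\sigma}\,d\tilde\nu_a(\sigma)$ that is bounded and holomorphic on $\{\operatorname{Im}z>0\}$ and continuous on the closed upper half-plane (dominated convergence, since $\tilde\nu_a$ is finite). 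The hypothesis says $F=0$ on $(0,\infty)$, so Schwarz reflection through the positive real axis produces a holomorphic extension to $\CC\setminus(-\infty,0]$ that vanishes on a nonempty open set and therefore identically. Thus $\tilde\nu_a\equiv 0$, which, translated back, means exactly that $\nu_a$ is odd, i.e.\ $\nu$ is antisymmetric with respect to the point $-a/2$. The same reasoning with $b$ gives antisymmetry of $\nu$ with respect to $-b/2$.

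Third, I would compose these two antisymmetries. Writing $T_c$ for reflection about $c$, we have $(T_{-a/2})_*\nu=-\nu$ and $(T_{-b/2})_*\nu=-\nu$, so
\[
(T_{-a/2}\circ T_{-b/2})_*\nu=\nu.
\]
But the composition of two reflections is a translation, here by $b-a\neq 0$. Hence $\nu$ is a finite measure on $\R$ invariant under a nontrivial translation, which forces $\nu=0$; therefore $\mu=0$ and $u=0$.

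The main obstacle I anticipate is the analytic-continuation step: one has to justify carefully that $F$ extends continuously to the closed half-plane and that vanishing on a half-line is enough to conclude $F\equiv 0$, and one also has to keep track of the possible atom of $\nu_a$ at the origin under the pushforward by $s\mapsto s^2$ (the atom survives with the same mass, and the conclusion $\tilde\nu_a=0$ kills it, so in the end the antisymmetry holds on all of $\R$). Once this is done, the remaining combinatorial and group-theoretic step reducing everything to translation invariance is short.
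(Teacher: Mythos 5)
This statement is the motivating result quoted from \cite{Jaming-Kellay-2018}; the present paper gives no proof of it, so your argument can only be judged on its own terms and against that reference. Your proof is correct, and it follows the same strategy as the cited paper: parametrize the measure by the parabola to get a finite measure $\nu$ on $\R$, show that vanishing along $x=at$ forces $\nu$ to be antisymmetric about $-a/2$, and observe that the composition of the two point-reflections is the nontrivial translation by $b-a$, under which a finite measure can only be invariant if it vanishes. All the delicate points are handled correctly: the boundedness and continuity of $F$ on the closed upper half-plane, the atom of $\nu_a$ at the origin, and the summability argument showing that a finite translation-invariant measure is zero. Two small polish points. First, the reduction to $\supp\mu\subset\set{\xi=\eta^2}$ is immediate if the equation is imposed distributionally on all of $\R\times\R$, but the statement only posits it on $\R_+\times\R$; since $(\xi-\eta^2)\mu$ need not be a finite measure, the one-sided version is not automatic, and you should either take ``solution with measure data'' to mean exactly that $\mu$ is carried by the parabola (as the cited reference does) or supply an extra word on why the half-plane hypothesis suffices. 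Second, after the Schwarz reflection the extension vanishes on $(0,\infty)$, which is not an open subset of $\CC$; invoke the identity theorem via a set with an accumulation point in the connected domain $\CC\setminus(-\infty,0]$ (or, alternatively, a boundary uniqueness theorem for bounded holomorphic functions on a half-plane) rather than ``a nonempty open set.''
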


In other words, a solution of the Schr\"odinger equation is uniquely determined by its value in two moving points $x=at$ and $x=bt$, $t>0$.
The proof however does not provide any quantitative estimate on $u$ from its values on these points.

We first consider the one-dimensional Schr\"odinger equation 
$u_t+iu_{xx}=0$ in a bounded interval $I$ with periodic boundary conditions and initial data $u_0\in L^2(I)$. 
We prove among other things the  observability relations
\begin{equation*}
\int_0^T\abs{u(t,at)}^2\,\mbox{d}t\asymp \norm{u_0}_{L^2(I)}^2
\end{equation*}
for all non-integer real numbers $a$ and for all $T>0$.
(See the beginning of the next section for the notations.)
On the other hand, the relations 
\begin{equation*}
\sum_{i=1}^m\int_0^T\abs{u(t,a_it)}^2\,\mbox{d}t\asymp \norm{u_0}_{L^2(I)}^2
\end{equation*}
fail for any choice of finitely many integers $a_i$ and for any $T>0$.

Here and in the sequel the notation $A\asymp B$ means that $c_1 A\le B\le c_2 A$ with some positive constants $c_1, c_2$ that do not depend on the particular choice of the initial data.

Next we carry over a similar study for the one-dimensional beam equation $u_{tt}+u_{xxxx}=0$  in a bounded interval $I$ with periodic boundary conditions and initial data $u_0\in L^2(I)$, $u_1\in H^{-2}(I)$. 
For example, we have
\begin{equation*}
\int_0^T\abs{u(t,at)}^2\,\mbox{d}t\asymp \norm{u_0}_{L^2(I)}^2+\norm{u_1}_{H^{-2}(I)}^2
\end{equation*}
if and only if the circle centered in $(\frac{-a}{2},\frac{a}{2})$ and passing through the origin contains no other points with integer coordinates.
This is the case whenever $a$ is irrational.
Furthermore, we give a necessary and sufficient geometric condition for the validity of the estimates 
\begin{equation*}
\int_0^T\abs{u(t,a_1t)}^2\,\mbox{d}t
+\int_0^T\abs{u(t,a_2t)}^2\,\mbox{d}t
\asymp \norm{u_0}_{L^2(I)}^2+\norm{u_1}_{H^{-2}(I)}^2
\end{equation*}
in case of two given numbers $a_1, a_2$.
It remains an open question whether there exist exceptional cases indeed.

Finally we consider vibrating rectangular plates. 
Improving several earlier theorems given in \cite{Haraux-1989,Jaf1988,Jaffard-1990,
Komornik-1992}, it was shown in \cite{Komornik-Loreti-2014} that these plates may be observed on an arbitrarily small segment which is parallel to one of the sides of the rectangle.
Using a different tool we prove that the observability still holds for oblique segments. 

The paper is organized as follows. 
In Section \ref{s2} we recall some Ingham type theorems that we need in the subsequent proofs.
Section \ref{s3} is then devoted to the one-dimensional Schr\"odinger equation while
Section \ref{s4} is devoted to the one-dimensional beam equation and Section \ref{s5}
to vibrating rectangular plates.
We end the paper with a list of open questions related to the problems studied here.

\section{A review of Ingham type inequalities}\label{s2}

Ingham type inequalities play a central role is this study. We therefore devote this section to summarize the results we use.

If $I$ is an interval of length $\abs{I}=2\pi$, then Parseval's equality 
\begin{equation*}
\frac{1}{\abs{I}}\int_I\abs{\sum_{k\in\ZZ}c_ke^{ikx}}^2\,\mbox{d}x=\sum_{k\in\ZZ}\abs{c_k}^2
\end{equation*}
holds for all square summable sequences $(c_k)$ of complex numbers. 
This equality remains valid if the length of $I$ is a positive multiple of $2\pi$.
It follows by an elementary argument that if $2k\pi<\abs{I}<(2k+2)\pi$ for some nonnegative integer $k$, then
\begin{equation*}
2k\pi\sum_{k\in\ZZ}\abs{c_k}^2\le\int_I\abs{\sum_{k\in\ZZ}c_ke^{ikx}}^2\,\mbox{d}x\le (2k+2)\pi\sum_{k\in\ZZ}\abs{c_k}^2
\end{equation*}
for all square summable sequences $(c_k)$, and the constants $2k\pi, (2k+2)\pi$ are the best possible here.
Hence 
\begin{equation*}
\int_I\abs{\sum_{k\in\ZZ}c_ke^{ikx}}^2\,\mbox{d}x\asymp \sum_{k\in\ZZ}\abs{c_k}^2
\end{equation*}
for every bounded interval $I$ of length $\ge 2\pi$.
Here and in the sequel we use the notation $A\ll B$ if there exists a positive constant $\alpha$ such that $A\le\alpha B$ for all sequences $(c_k)$, and $A\asymp B$ if $A\ll B$ and $B\ll A$.

Ingham \cite{Ingham-1936} proved an important generalization of the last relation.
A set $\Lambda$ of real numbers is called \emph{uniformly separated} if 
\begin{equation}\label{21}
\gamma(\Lambda):=\inf\set{\abs{\lambda_1-\lambda_2}\ :\ \lambda_1,\lambda_2\in\Lambda\qtq{and}\lambda_1\ne\lambda_2}>0;
\end{equation}
then $\gamma(\Lambda)$ is called the \emph{uniform gap} of $\Lambda$.
For example, $\ZZ$ is uniformly separated with $\gamma(\ZZ)=1$.
Note that the empty set and the one-point sets are uniformly separated with $\gamma(\Lambda)=\infty$.

\begin{TA}[Ingham]
Let $\Lambda\subset\RR$ be a uniformly separated set. 
\begin{enumerate}[\upshape (i)]
\item $\sum_{\lambda\in\Lambda}c_\lambda e^{i\lambda x}$ is a well-defined locally square summable function on $\RR$ for every square summable sequence $(c_\lambda)$.
\item The direct inequality 
\begin{equation*}
\int_I\abs{\sum_{\lambda\in\Lambda}c_\lambda e^{i\lambda x}}^2\,\mbox{d}x\ll \sum_{\lambda\in\Lambda}\abs{c_\lambda}^2
\end{equation*}
holds for every bounded interval $I$. 
\item The inverse inequality 
\begin{equation*}
\sum_{\lambda\in\Lambda}\abs{c_\lambda}^2\ll \int_I\abs{\sum_{\lambda\in\Lambda}c_ke^{i\lambda x}}^2\,\mbox{d}x
\end{equation*}
holds for every bounded interval $I$ of length $>\frac{2\pi}{\gamma(\Lambda)}$.
\end{enumerate}
\end{TA}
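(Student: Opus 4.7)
For the direct inequality (ii) together with (i), my plan is a standard test-function argument. Fix a bounded interval $I$ and construct a non-negative function $\phi:\RR\to\RR$ with $\phi\ge\mathbf{1}_I$ pointwise and whose Fourier transform $\hat\phi$ is compactly supported in $(-\gamma,\gamma)$, where $\gamma:=\gamma(\Lambda)$. Such a $\phi$ is obtained as the square of a suitable Paley--Wiener function of band $\gamma/2$ that is large enough on $I$ (concretely, a scaled and shifted Fej\'er-type kernel). For any finite sum $f_N=\sum_{k\le N}c_{\lambda_k}e^{i\lambda_k x}$, Plancherel gives
\begin{equation*}
\int_I\abs{f_N}^2\,\mbox{d}x\le\int_\RR\phi\abs{f_N}^2\,\mbox{d}x=\sum_{k,\ell\le N}c_{\lambda_k}\overline{c_{\lambda_\ell}}\hat\phi(\lambda_\ell-\lambda_k)=\hat\phi(0)\sum_{k\le N}\abs{c_{\lambda_k}}^2,
\end{equation*}
since $\abs{\lambda_k-\lambda_\ell}\ge\gamma$ for $k\ne\ell$ forces every off-diagonal term to vanish. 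Applying this estimate to differences $f_N-f_M$ of partial sums yields a Cauchy sequence in $L^2(I)$, proving (i); density then extends (ii) to arbitrary square-summable coefficients.

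For the inverse inequality (iii), I translate so that $I=[-T,T]$ with $T\gamma>\pi$, and take Ingham's test function $h(x)=\cos(\pi x/(2T))\mathbf{1}_{[-T,T]}(x)$, which satisfies $0\le h\le\mathbf{1}_I$. An explicit computation gives
\begin{equation*}
\hat h(\xi)=\frac{4\pi T\cos(T\xi)}{\pi^2-4T^2\xi^2},\qquad \hat h(0)=\frac{4T}{\pi}.
\end{equation*}
Expanding $\int h\abs{f}^2$ via Plancherel yields the diagonal contribution $\hat h(0)\sum\abs{c_\lambda}^2$ plus an off-diagonal sum; Cauchy--Schwarz reduces control of the latter to bounding $S:=\sup_\lambda\sum_{\mu\ne\lambda}\abs{\hat h(\mu-\lambda)}$. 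Since the nonzero differences from a fixed $\lambda$ form a uniformly separated set of gap $\gamma$, their $k$-th positive element $\xi_k$ satisfies $\xi_k\ge k\gamma$; combined with the monotone majorant $\abs{\hat h(\xi)}\le(4T/\pi)/(4T^2\xi^2/\pi^2-1)$ valid for $\abs{\xi}>\pi/(2T)$, this produces
\begin{equation*}
S\le\frac{8T}{\pi}\sum_{k\ge1}\frac{1}{4k^2u^2-1},\qquad u:=\frac{T\gamma}{\pi}>1.
\end{equation*}
The classical identity $\sum_{k\ge1}(4k^2-1)^{-1}=1/2$ (by telescoping $1/((2k-1)(2k+1))$) then yields $S<\hat h(0)$, so $(\hat h(0)-S)\sum\abs{c_\lambda}^2\le\int_I\abs{f}^2\,\mbox{d}x$, which is the inverse inequality with a strictly positive constant.

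The main obstacle is the tightness of the threshold $\abs{I}>2\pi/\gamma$: the decisive inequality $S<\hat h(0)$ collapses precisely at $u=1$, because the telescoping sum equals exactly $1/2$ there, so any attempt to weaken the hypothesis on $\abs{I}$ must use something beyond the simple majorant bound. A secondary subtlety is that one must work with the monotone upper envelope of $\abs{\hat h}$ rather than $\abs{\hat h}$ itself (which oscillates and vanishes periodically) in order to reduce the arbitrary uniformly separated configuration to the worst-case arithmetic progression; the construction of the Paley--Wiener majorant $\phi$ for (ii) is routine but requires care to achieve $\phi\ge\mathbf{1}_I$ and $\mathrm{supp}\,\hat\phi\subset(-\gamma,\gamma)$ simultaneously.
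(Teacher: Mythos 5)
This statement is quoted in the paper as a classical result of Ingham (1936) and is not proved there, so there is no internal proof to compare against; your argument is a correct reconstruction of Ingham's original one. The majorant/minorant test-function scheme, the kernel $h(x)=\cos(\pi x/(2T))\mathbf{1}_{[-T,T]}$ with $\hat h(\xi)=4\pi T\cos(T\xi)/(\pi^2-4T^2\xi^2)$, the Schur-type reduction to $S<\hat h(0)$, and the comparison of the separated differences with the arithmetic progression $k\gamma$ are exactly the standard proof (cf.\ also Komornik--Loreti, \emph{Fourier Series in Control Theory}); the only detail worth spelling out is that for a long interval $I$ the Fej\'er-type majorant $\phi\ge\mathbf{1}_I$ with $\supp\hat\phi\subset(-\gamma,\gamma)$ should be built as a finite sum of translates of a single Fej\'er kernel (or one covers $I$ by finitely many short intervals), since a single kernel vanishes periodically and cannot dominate $\mathbf{1}_I$ when $\abs{I}$ is large.
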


\begin{remark}\label{r21}
If $\Lambda$ is not uniformly separated, but it is the union of finitely many, say $m$ uniformly separated sets, then a simple application of the inequality
\begin{equation*}
(x_1+\cdots+x_m)^2\le m(x_1^2+\cdots+x_m^2)
\end{equation*}
shows that the direct inequality still holds.
\end{remark}

The condition $\abs{I}>\frac{2\pi}{\gamma(\Lambda)}$ is the best uniform condition for all uniformly separated sets, but it can be weakened  for individual 
uniformly separated sets.
We illustrate this by recalling from \cite{Haraux-1989} the following

\begin{TB}[Haraux]
If $\Lambda\subset\RR$ is a uniformly separated set and $F\subset\Lambda$ is a finite subset, then the inverse inequality of Theorem A holds under the condition $\abs{I}>\frac{2\pi}{\gamma(\Lambda\setminus F)}$.
\end{TB}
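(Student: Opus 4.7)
The plan is to argue by contradiction via weak compactness in $\ell^2(\Lambda)$, coupling the boundedness of the full family with the inverse Ingham inequality already available on the subset $\Lambda\setminus F$. Assume the conclusion fails: then one can find a sequence $(c^{(n)})\subset\ell^2(\Lambda)$ with $\sum_{\lambda}\abs{c^{(n)}_\lambda}^2=1$ and $\int_I\abs{\sum_\lambda c^{(n)}_\lambda e^{i\lambda x}}^2\,\mbox{d}x\to 0$. By weak compactness of the unit ball, after passing to a subsequence, $c^{(n)}$ converges weakly in $\ell^2(\Lambda)$ to some $c^\infty$, so $c^{(n)}_\lambda\to c^\infty_\lambda$ componentwise. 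Because $F$ is finite, this convergence is strong on $F$, and the finite sum $\sum_{\mu\in F}c^{(n)}_\mu e^{i\mu x}$ converges to $\sum_{\mu\in F}c^\infty_\mu e^{i\mu x}$ in $L^2(I)$. Subtracting from the vanishing full series yields $\sum_{\lambda\in\Lambda\setminus F}c^{(n)}_\lambda e^{i\lambda x}\to -\sum_{\mu\in F}c^\infty_\mu e^{i\mu x}$ in $L^2(I)$.

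Next I would apply Theorem~A to the uniformly separated set $\Lambda\setminus F$ on $I$, which is legitimate since $\abs{I}>2\pi/\gamma(\Lambda\setminus F)$, at the differences $c^{(n)}-c^{(m)}$:
\begin{equation*}
\sum_{\lambda\in\Lambda\setminus F}\abs{c^{(n)}_\lambda-c^{(m)}_\lambda}^2\ll\int_I\abs{\sum_{\lambda\in\Lambda\setminus F}(c^{(n)}_\lambda-c^{(m)}_\lambda)e^{i\lambda x}}^2\,\mbox{d}x.
\end{equation*}
The right-hand side tends to $0$ as $n,m\to\infty$ by the previous step, so $(c^{(n)}|_{\Lambda\setminus F})$ is Cauchy in $\ell^2(\Lambda\setminus F)$ and converges strongly to $c^\infty|_{\Lambda\setminus F}$. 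Combined with the strong convergence on $F$, one obtains $c^{(n)}\to c^\infty$ strongly in $\ell^2(\Lambda)$, hence $\sum_\lambda\abs{c^\infty_\lambda}^2=1$ and $\sum_\lambda c^\infty_\lambda e^{i\lambda x}=0$ in $L^2(I)$.

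The hard part will be the final step: producing a contradiction from this nonzero $c^\infty$, for $\abs{I}$ need not exceed $2\pi/\gamma(\Lambda)$ and Theorem~A is not directly applicable on the whole of $\Lambda$. My strategy is to establish that the finitely many exponentials $\set{e^{i\mu x}:\mu\in F}$ are linearly independent modulo the closed linear span $V$ of $\set{e^{i\lambda x}:\lambda\in\Lambda\setminus F}$ in $L^2(I)$. Granted this, the identity $\sum_{\mu\in F}c^\infty_\mu e^{i\mu x}=-\sum_{\lambda\in\Lambda\setminus F}c^\infty_\lambda e^{i\lambda x}\in V$ forces $c^\infty|_F=0$, and then Theorem~A on $\Lambda\setminus F$ forces $c^\infty|_{\Lambda\setminus F}=0$ as well, yielding the contradiction. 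The independence itself I would obtain from a Paley-Wiener duality: the orthogonal complement $V^\perp\subset L^2(I)$ corresponds, under Fourier transform, to entire functions of exponential type $\abs{I}/2$ belonging to $L^2(\RR)$ and vanishing on $\Lambda\setminus F$; the strict inequality $\abs{I}>2\pi/\gamma(\Lambda\setminus F)$ places $\Lambda\setminus F$ strictly below the critical density for sampling in this Paley-Wiener space, so this family is rich, and, combined with the fact that each $\mu\in F$ lies at distance at least $\gamma(\Lambda)>0$ from $\Lambda\setminus F$, it separates the finitely many classes $[e^{i\mu x}]$ in the quotient $L^2(I)/V$.
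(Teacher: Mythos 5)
You should first be aware that the paper contains no proof of Theorem~B: it is recalled verbatim from Haraux's 1989 article, so the only comparison available is with the classical argument, which is constructive and elementary --- one reinstates the points of $F$ one at a time using the averaging operator $f\mapsto\frac{1}{\eps}\int_0^{\eps}\bigl(f(\cdot+s)e^{-i\mu s}-f(\cdot)\bigr)\,\mbox{d}s$, which annihilates the $\mu$-mode, multiplies every coefficient $c_\lambda$ with $\abs{\lambda-\mu}\ge\gamma(\Lambda)$ by a factor bounded away from zero, and lengthens the observation interval only by $\eps$. Your route is genuinely different and, up to a point, correct: the normalization, the weak-compactness extraction, the strong convergence of the finitely many coordinates indexed by $F$, and the use of Theorem~A's inverse inequality for the uniformly separated set $\Lambda\setminus F$ on $I$ (legitimate, since $\abs{I}>2\pi/\gamma(\Lambda\setminus F)$) to upgrade weak to strong $\ell^2$-convergence are all sound, and they correctly reduce the quantitative inequality to the qualitative uniqueness statement: if $c\in\ell^2(\Lambda)$ and $\sum_{\lambda}c_\lambda e^{i\lambda x}=0$ in $L^2(I)$, then $c=0$.

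The genuine gap is in your final paragraph, and it sits exactly where all the difficulty of the theorem is concentrated. The independence of the classes $\bigl[e^{i\mu x}\bigr]$, $\mu\in F$, modulo $V$ is the theorem restated in dual form: via Paley--Wiener it says that the evaluation map $G\mapsto(G(\mu))_{\mu\in F}$ is \emph{surjective} from $\set{G\in PW^2_{\abs{I}/2}\ :\ G|_{\Lambda\setminus F}=0}$ onto $\CC^F$, i.e.\ an \emph{interpolation} statement at the points of $F$ relative to $\Lambda\setminus F$. Your justification --- that $\Lambda\setminus F$ lies "strictly below the critical density for sampling, so this family is rich" --- both conflates sampling with interpolation (sub-critical density tells you $\Lambda\setminus F$ is not a sampling set, i.e.\ the corresponding exponentials are not a frame, which is not what you need) and, even when corrected, appeals to Beurling-type density theorems that are substantially deeper than Haraux's theorem itself, without carrying out the deduction. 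To close the gap along your lines you would still have to (a) prove $V\ne L^2(I)$, e.g.\ by noting that a complete Riesz sequence would be a Riesz basis, incompatible with the inverse inequality holding on a strictly shorter subinterval $I'\subset I$ (available because the hypothesis $\abs{I}>2\pi/\gamma(\Lambda\setminus F)$ is strict) when tested on a function supported in $I\setminus I'$; and (b) upgrade one nonzero $G_0\in V^\perp$ to surjectivity of the evaluation map on $F$, which requires transplanting finitely many zeros of $G_0$ onto $F$ by factors $\prod_j(\xi-\mu_j)/(\xi-\nu_j)$ and hence requires knowing that $G_0$ has zeros off $\Lambda\setminus F$ (true by the Cartwright--Levinson zero count versus the density of $\Lambda\setminus F$, but this must be argued). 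None of this is in the write-up, so the proof is incomplete at its decisive step.
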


\begin{example}\label{e22}
The square numbers $0,1,4,9,\ldots$ form a uniformly separated set $\Lambda$ with $\gamma(\Lambda)=1$. 
If $F=\set{0,1,\ldots,m-1}$ for some positive integer $m$, then  $\gamma(\Lambda\setminus F)=2m+1$, so that the inverse inequality holds under the condition $\abs{I}>\frac{2\pi}{2m+1}$. 
Since $m$ may be chosen arbitrarily large, hence the inverse inequality holds for all non-degenerated bounded intervals.
\end{example}

\begin{remark}\label{r23}
The proof of Theorem B shows that the direct and inverse inequalities remain valid under the same assumptions for more general sums of the form
\begin{equation*}
\sum_{\lambda\in\Lambda'}c_{\lambda'}xe^{i\lambda' x}+
\sum_{\lambda\in\Lambda}c_\lambda e^{i\lambda x}
\end{equation*}
where $\Lambda'$ is some finite subset of $\Lambda$; see \cite[Theorem 4.5]{Komornik-Loreti-2005}.
\end{remark}

For a particular uniformly separated set the optimal condition for the inverse inequality  has been determined by Beurling \cite{Beurling}; see also \cite{Baiocchi-Komornik-Loreti-2002} for a generalization to weakly separated sets.

\section{One-dimensional Schr\"odinger equation}\label{s3}

We consider the one-dimensional Schr\"odinger equation on a bounded interval with periodic boundary condition.
Up to an affine change of variable, we may assume that the interval is $(0,2\pi)$. Thus we consider the following system
\begin{equation}\label{31}
\begin{cases}
u_t+iu_{xx}=0&\mbox{in }\RR\times(0,2\pi),\\
u(t,0)=u(t,2\pi)&\mbox{for }t\in\RR,\\
u_x(t,0)=u_x(t,2\pi)&\mbox{for }t\in\RR,\\
u(0,x)=u_0(x)&\mbox{for }x\in (0,2\pi).
\end{cases}
\end{equation}
Setting $L^2:=L^2(0,2\pi)$ for brevity and introducing the Sobolev space
\begin{equation*}
H^2_p:=\set{v\in H^2(0,2\pi)\ :\ v(0)=v(2\pi)\qtq{and}v_x(0)=v_x(2\pi)},
\end{equation*}
for each initial datum $u_0\in H^2_p$ there is a unique weak solution 
\begin{equation*}
u\in C(\RR,H^2_p)\cap C^1(\RR,L^2).
\end{equation*}
Furthermore, $u$ has a Fourier series representation
\begin{equation}\label{32}
u(t,x)=\sum_{k\in\ZZ}c_ke^{i(k^2t+kx)}
\end{equation}
where the $c_k$'s are the Fourier coefficients of $u_0$:
\begin{equation*}
u_0(x)=\sum_{k\in\ZZ}c_ke^{ikx}.
\end{equation*}
In particular, the $c_k$ satisfy Parseval's identity 
\begin{equation*}
\sum_{k\in\ZZ}\abs{c_k}^2=\frac{1}{2\pi}\norm{u_0}_{L^2}^2.
\end{equation*}
Using \eqref{32} we extend the solutions to $\RR^2$ by $2\pi$-periodicity in $x$ and $t$.

First we ask whether the observability of the solutions on a fixed line segment of $\RR^2$ allows us to identify the unknown initial datum. 

The case of vertical segments is easy: since the exponential functions $e^{ikx}$ form an orthogonal basis in $L^2(I)$ on every interval $I$ of length $2\pi$, we infer from the formula
\begin{equation*}
u(t_1,x)=\sum_{k\in\ZZ}\left(c_ke^{ik^2t_1}\right)e^{ikx}
\end{equation*}
that the knowledge of $u$ on a segment $\set{t_1}\times I$ determines $u_0$ if and only if $\abs{I}\ge 2\pi$.
Moreover, in the latter case we also have the quantitative relation 
\begin{equation*}
\int_I\abs{u(t_1,x)}\,\mbox{d}x\asymp \sum_{k\in\ZZ}\abs{c_k}^2.
\end{equation*}

The case of horizontal segments (pointwise observability) is different: we infer from the equality
\begin{equation*}
u(t,x_1)=\sum_{k\in\ZZ}\left(c_ke^{ikx_1}\right)e^{ik^2t}
=c_0+\sum_{k=1}^{\infty}e^{ik^2t}\left(c_ke^{ikx_1}+c_{-k}e^{-ikx_1}\right)
\end{equation*}
that the knowledge of $u$ even on the line on a segment $\RR\times \set{x_1}$ does not determine $u_0$.
For example, if $u_0(x)=e^{-ix_1}e^{ix}-e^{ix_1}e^{-ix}$, that is
 $c_1=e^{-ix_1}$, $c_{-1}=-e^{ix_1}$ and $c_k=0$ for all other $k$'s, then $u(t,x_1)=0$ for all $t\in\RR$,
although $u(t,x)$ is not the zero solution.

The situation is much better for most \emph{oblique} segments:
\medskip 

\begin{theorem}\label{t31}
Fix $(t_1,x_1)\in\RR^2$, $a\in\RR$ and $T>0$ arbitrarily, and consider the solutions of \eqref{31}. 
\begin{enumerate}[\upshape (i)]
\item The direct inequality
\begin{equation*}
\int_0^T\abs{u(t_1+t,x_1-at)}^2\,\mbox{d}t\ll \sum_{k\in\ZZ}\abs{c_k}^2
\end{equation*}
always holds.
\item If $a\notin\ZZ$, then the inverse inequality
\begin{equation}\label{33}
\sum_{k\in\ZZ}\abs{c_k}^2\ll \int_0^T\abs{u(t_1+t,x_1-at)}^2\,\mbox{d}t 
\end{equation}
also holds.
\item If $a\in\ZZ$, then 
\begin{equation}\label{34}
\int_0^T\abs{u(t_1+t,x_1-at)}^2\,\mbox{d}t\asymp \sum_{k\in\ZZ}^{\infty}\abs{d_k+d_{a-k}}^2,
\end{equation}
where we use the notations
\begin{equation*}
d_k:=c_ke^{i(k^2t_1+kx_1)},\quad k\in\ZZ.
\end{equation*}
In particular, then  there exist non-trivial solutions satisfying
\begin{equation}\label{35}
u(t_1+t,x_1-at)=0\qtq{for all}t\in\RR,
\end{equation} 
and therefore the inverse inequality \eqref{33} fails.
\end{enumerate}
\end{theorem}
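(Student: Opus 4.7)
The plan is to substitute the Fourier representation \eqref{32} into the observation and reduce to an Ingham-type inequality for an explicit set of frequencies. Evaluating \eqref{32} along $t\mapsto(t_1+t,x_1-at)$ gives
\begin{equation*}
u(t_1+t,x_1-at)=\sum_{k\in\ZZ}d_ke^{i\lambda_kt},\qquad\lambda_k:=k^2-ak,
\end{equation*}
with $d_k$ as in the statement, so $\abs{d_k}=\abs{c_k}$. The whole analysis is controlled by the elementary factorisation
\begin{equation*}
\lambda_k-\lambda_j=(k-j)(k+j-a),
\end{equation*}
which cleanly separates the cases $a\notin\ZZ$ and $a\in\ZZ$.

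For (ii), if $a\notin\ZZ$ the map $k\mapsto\lambda_k$ is injective (otherwise $k+j=a$ would be an integer distinct from $k, j$), and the factorisation forces $\abs{\lambda_k-\lambda_j}\geq\mathrm{dist}(a,\ZZ)>0$, so $\Lambda:=\set{\lambda_k:k\in\ZZ}$ is uniformly separated. Theorem~A(ii) then gives the direct estimate (i) and the inverse inequality (ii) for all sufficiently large $T$. To extend (ii) to every $T>0$ I remove the finite subset $F_M:=\set{\lambda_k:\abs{k-a/2}\leq M}$ and claim $\gamma(\Lambda\setminus F_M)\to\infty$ as $M\to\infty$: for any two distinct indices $k,j$ outside the window, either $k,j$ lie on the same side of $a/2$, in which case $\abs{k+j-a}\geq 2M$, or on opposite sides, in which case $\abs{k-j}\geq 2M$ and $\abs{k+j-a}\geq\mathrm{dist}(a,\ZZ)$; the product is therefore at least $2M\cdot\min(1,\mathrm{dist}(a,\ZZ))$. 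Haraux's Theorem~B applied with $M$ so large that $2\pi/\gamma(\Lambda\setminus F_M)<T$ yields (ii).

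For (iii), if $a\in\ZZ$ the identity $\lambda_k=\lambda_{a-k}$ collapses the exponents pairwise, so
\begin{equation*}
u(t_1+t,x_1-at)=\sum_{\mu\in\Lambda'}C_\mu e^{i\mu t},
\end{equation*}
where $\Lambda'$ is the set of distinct values of $\lambda_k$ and $C_\mu=d_k+d_{a-k}$ for any $k$ with $\lambda_k=\mu$ (the pair being degenerate when $a$ is even and $k=a/2$). After translation and reindexing $\Lambda'$ is of the shifted-squares type treated in Example~\ref{e22}, so Theorem~B gives $\int_0^T\abs{u(t_1+t,x_1-at)}^2\,\mbox{d}t\asymp\sum_\mu\abs{C_\mu}^2$ for every $T>0$; this is \eqref{34} after relabelling, and simultaneously completes (i) in this case via $\abs{C_\mu}^2\leq 2(\abs{d_k}^2+\abs{d_{a-k}}^2)$. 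Non-trivial solutions realising \eqref{35} come from fixing any pair $\set{k,a-k}$ with $k\neq a-k$ and setting $d_k=1$, $d_{a-k}=-1$, all other coefficients zero.

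The main delicate point is the divergence $\gamma(\Lambda\setminus F_M)\to\infty$ in case (ii): without the factorisation one might fear clumping of the quadratic exponents from cross pairs with $k,j$ on opposite sides of $a/2$, but non-integrality of $a$ pins $\abs{k+j-a}$ away from $0$ by $\mathrm{dist}(a,\ZZ)$, and the $\abs{k-j}$ factor then supplies arbitrarily large gaps outside a large window, so that Haraux's theorem is applicable for every $T>0$.
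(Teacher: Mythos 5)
Your proposal is correct and follows essentially the same route as the paper: substitute the Fourier series to get exponents $\lambda_k=k^2-ak$, use the factorisation $\lambda_k-\lambda_j=(k-j)(k+j-a)$ to get uniform separation with gap $\mathrm{dist}(a,\ZZ)$ when $a\notin\ZZ$, remove a finite window around $a/2$ to drive the gap to infinity and invoke Haraux's Theorem~B for arbitrary $T>0$, and for $a\in\ZZ$ collapse the pairs $\set{k,a-k}$ onto a shifted-squares frequency set. The only differences are cosmetic (the paper establishes the direct inequality uniformly for all real $a$ by splitting $\Lambda$ into the two half-sets $k\ge a/2$ and $k<a/2$, and centres its removed window at $0$ rather than $a/2$).
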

Changing $u(t,x)$ to $v(t,x):=u(-t,x)$ we see that analogous results hold if we change the equation in \eqref{31} to $u_t-iu_{xx}=0$.

\begin{proof}
(i) For any fixed $a\in\RR$ a straightforward computation shows that 
\begin{equation}\label{36}
u(t_1+t,x_1-at) 
=\sum_{k\in\ZZ}c_ke^{i(k^2(t_1+t)+k(x_1-at))}=\sum_{k\in\ZZ}d_ke^{i(k^2-ak)t}.
\end{equation}
Since $\Lambda:=\set{k^2-ak\ :\ k\in\ZZ}$ is the union of 
\begin{equation*}
\set{k^2-ak\ :\ k\in\ZZ,\quad k\ge a/2} 
\qtq{and} 
\set{k^2-ak\ :\ k\in\ZZ,\quad k< a/2},
\end{equation*}
it suffices to show that latter two sets are uniformly discrete. 
(In view of Theorem A (i) this will also show that the restrictions of the solutions for segments are well defined.)
This follows from the following inequalities: if $k\ge a/2$
\begin{equation*}
\bigl((k+1)^2-a(k+1)\bigr)-(k^2-ak)=2k+1-a\ge 1
\end{equation*}
while if $k<a/2$,
\begin{equation*}
(k^2-ak)-\bigl((k-1)^2-a(k-1)\bigr)=2k+a-1\le -1.
\end{equation*}
\medskip 

(ii) If $a\notin\ZZ$, then the set $\set{k^2-ak\ :\ k\in\ZZ}$ itself is uniformly discrete. 
Indeed, if $k$ and $m$ are different integers, then
\begin{equation*}
|(k^2-ak)-(m^2-am)|=|k-m||k+m-a|\geq d(a,\ZZ):=\max(a-[a],[a]+1-a)
\end{equation*} 
where $[a]$ is the integer part of $a$.
If, for some  positive integer $N$, $k\not=m$ and $k,m\notin\set{-N,\ldots,N}$, then, using again the identity 
\begin{equation*}
(k^2-ak)-(m^2-am)=(k-m)(k+m-a),
\end{equation*}
we have 
\begin{equation*}
\abs{(k^2-ak)-(m^2-am)}\ge 
\begin{cases}
 2N-a&\text{if }k>m\ge N,\\
 2N+a&\text{if }k<m\le-N,\\
 2Nd(a,\ZZ)&\text{if }km<0.
\end{cases}
\end{equation*} 
It follows that 
\begin{equation*}
\gamma(\Lambda\setminus\set{-N,\ldots,N})\ge Nd(a,\ZZ)
\end{equation*}
for all integers $N>\abs{a}$.
Letting $N\to\infty$ and applying Theorem B we get the inverse inequality \eqref{33}. 
\medskip 

(iii) If $a\in\ZZ$, then we may rewrite \eqref{36} in the form 
\begin{equation*}
u(t_1+t,x_1-at) 
=d_{a/2}e^{-i(a^2/4)t}+\sum_{k\in\ZZ, k>a/2}(d_k+d_{a-k})e^{i(k^2-ak)t}
\end{equation*}
with the convention $d_{a/2}:=0$ if $a$ is an odd integer.

Since the set 
\begin{equation*}
\set{k^2-ak\ :\ k\in\ZZ,\quad k\ge a/2} 
\end{equation*}
is uniformly separated, and 
\begin{equation*}
\abs{(k^2-ak)-(m^2-am)}\ge 2N-a
\end{equation*}
whenever $k>m\ge N$, applying Theorem B we get \eqref{34} 
because
\begin{equation*}
\abs{d_{a/2}}^2+\sum_{k\in\ZZ,\ k>a/2}^{\infty}\abs{d_k+d_{a-k}}^2
\asymp \sum_{k\in\ZZ}^{\infty}\abs{d_k+d_{a-k}}^2.
\end{equation*}

It follows from \eqref{34} that all solutions satisfying $d_{a/2}=0$ and $d_k+d_{a-k}=0$ for all $k\in\ZZ$ satisfy the equality \eqref{35}.
If at least one of these coefficients is different from zero, then the right side of \eqref{33} vanishes, while the left side is positive.

A concrete nonzero function satisfying \eqref{35} may be given as follows. 
We choose an integer $k\ne a/2$ and then two nonzero numbers $c_k, c_{a-k}$ satisfying the equality 
\begin{equation*}
c_ke^{i(k^2t_1+kx_1)}+c_{a-k}e^{i((a-k)^2t_1+(a-k)x_1)}=0.
\end{equation*}
Then the function 
\begin{equation*}
u(t,x):=c_ke^{i(k^2t+kx)}+c_{a-k}e^{i((a-k)^2t+(a-k)x)}
\end{equation*}
has the required properties.
\end{proof}

Next we investigate what happens if we observe the solutions on two or more segments.
In view of Theorem \ref{t31} (i) we only investigate the validity of the inverse inequalities.

\begin{theorem}\label{t32}
Fix $T>0$ arbitrarily, and consider the solutions of \eqref{31}. 
\begin{enumerate}[\upshape (i)]
\item Let $(t_1,x_1),(t_2,x_2)\in\RR^2$,  and $a_1,a_2$ two different integers.
If
\begin{equation*}
u(t_1+t,x_1-a_1t)=u(t_2+t,x_2-a_2t)=0\qtq{for all}t\in(0,T),
\end{equation*}
then $u$ is the trivial solution, i.e., $u_0=0$.
Nevertheless, the inverse inequality
\begin{equation*}
\sum_{k\in\ZZ}\abs{c_k}^2\ll \int_0^T\abs{u(t_1+t,x_1-a_1t)}^2+\abs{u(t_2+t,x_2-a_2t)}^2\,\mbox{d}t 
\end{equation*}
fails.

\item The inverse inequality
\begin{equation*}
\sum_{k\in\ZZ}\abs{c_k}^2\ll \sum_{i=1}^m\int_0^T\abs{u(t_1+t,x_1-a_it)}^2\,\mbox{d}t 
\end{equation*}
also fails for any $(t_1,x_1)\in\RR^2$  and any finite number of integers $a_1,\ldots, a_m$.
\end{enumerate}
\end{theorem}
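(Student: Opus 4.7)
For the uniqueness assertion in~(i), I would apply Theorem~\ref{t31}~(iii) separately to each of the two observation segments. Vanishing on the $j$-th segment forces $d^{(j)}_k+d^{(j)}_{a_j-k}=0$ for every $k\in\ZZ$, where $d^{(j)}_k:=c_ke^{i(k^2t_j+kx_j)}$. Taking moduli gives $\abs{c_k}=\abs{c_{a_j-k}}$ for $j=1,2$, and combining the two identities yields $\abs{c_k}=\abs{c_{k+b}}$ with $b:=a_2-a_1\ne0$. Hence $(\abs{c_k})$ is a $b$-periodic sequence, and since $(c_k)\in\ell^2(\ZZ)$ it must vanish identically, so $u_0=0$.

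For the failure of the inverse inequalities in both parts, the plan is to construct a sequence $(c^{(N)})_{N\ge1}$ of truncated initial data whose total observation energy remains bounded in $N$ while $\sum_k\abs{c_k^{(N)}}^2\asymp 2N\to\infty$. In case~(ii), where all $m$ segments share the base point, I would set
\begin{equation*}
c_k^{(N)}:=\sign\!\left(k-\tfrac{a_1}{2}\right)e^{-i(k^2t_1+kx_1)}\qtq{if}\abs{k}\le N,
\end{equation*}
and $c_k^{(N)}:=0$ otherwise. With $d_k:=c_k^{(N)}e^{i(k^2t_1+kx_1)}$, the antisymmetry $d_{a_1-k}=-d_k$ holds whenever both $k,a_1-k\in[-N,N]$, so Theorem~\ref{t31}~(iii) bounds the first observation energy by a sum with only $O(\abs{a_1})$ boundary defects. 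For each $i\ge2$ the same antisymmetry rewrites $d_k+d_{a_i-k}=\sign(k-a_1/2)-\sign(k-a_1/2-(a_i-a_1))$ on the interior, nonzero only on an interval of length $\abs{a_i-a_1}$, giving an $i$-th observation energy of $O(\abs{a_i})$; summing in $i$ leaves the total bounded.

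Case~(i) proceeds in the same spirit, but the two base points introduce a phase twist $\tau(k):=k^2(t_2-t_1)+k(x_2-x_1)$ that must be absorbed into $c_k^{(N)}$. Setting $C:=a_2(t_2-t_1)+(x_2-x_1)$ and $\psi(k):=-C(k^2-a_1k)/b$, direct computation produces the two identities $\psi(a_1-k)=\psi(k)$ and $\psi(a_2-k)-\psi(k)=(2k-a_2)C$, the latter matching $\tau(k)-\tau(a_2-k)$. Taking
\begin{equation*}
c_k^{(N)}:=\sign\!\left(k-\tfrac{a_1}{2}\right)e^{i\psi(k)}e^{-i(k^2t_1+kx_1)}\qtq{if}\abs{k}\le N,
\end{equation*}
and $0$ otherwise, the first identity preserves the antisymmetry of $d^{(1)}_k:=c_k^{(N)}e^{i(k^2t_1+kx_1)}$ around $a_1/2$, keeping the first observation energy $O(\abs{a_1})$. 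The second identity together with this antisymmetry reduces $d^{(2)}_k+d^{(2)}_{a_2-k}$ to $(\sign(k-a_1/2)+\sign(a_2-a_1/2-k))e^{i(\psi(k)+\tau(k))}$, which vanishes outside the $O(\abs{b})$ integers in $(a_1/2,a_1/2+b)$, yielding second observation energy $O(\abs{a_2})$.

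The hard part is case~(i): without the quadratic phase $\psi$ the two antisymmetries around $a_1/2$ and around $a_2/2$ are incompatible (as the uniqueness statement itself shows, since no nonzero $\ell^2$ sequence can satisfy both exactly), and a naive truncation of a sign function produces a second observation energy growing like $N$. The identities $\psi(a_1-k)=\psi(k)$ and $\psi(a_2-k)-\psi(k)=(2k-a_2)C$ are simultaneously satisfied by a unique quadratic polynomial, and it is exactly this cancellation that converts the would-be divergent second observation into $O(1)$ interior and boundary defects while preserving the first.
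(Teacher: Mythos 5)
Your proof is correct, and while the uniqueness argument in (i) and the construction in (ii) essentially coincide with the paper's (the paper truncates $\sgn k$ at $0$ rather than at $a_1/2$ and then does a small counting argument for each $a_i$; your centering at $a_1/2$ just makes the first observation vanish identically on the interior, which is a cosmetic improvement), your counterexample for the second half of (i) is genuinely different from the paper's. The paper builds a \emph{chain}: starting from $d_q=1$ it alternately enforces the exact cancellations $d_k+d_{a_1-k}=0$ and $\omega_kd_k+d_{a_1-k+p}=0$ (with $p=a_2-a_1$ and $\omega_k$ the unimodular phase ratios coming from the two base points), producing $2n+2$ unimodular coefficients supported on two arithmetic progressions of step $p$, so that only the two endpoints of the chain contribute and the total observation energy is exactly $2$ while the norm is $2n+2$. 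You instead take the full truncated sign function and absorb the phase discrepancy $\tau(k)=k^2(t_2-t_1)+k(x_2-x_1)$ into a single explicit quadratic modulation $\psi(k)=-C(k^2-a_1k)/b$; I checked that your two identities $\psi(a_1-k)=\psi(k)$ and $\psi(a_2-k)-\psi(k)=(2k-a_2)C=\tau(k)-\tau(a_2-k)$ both hold, so the first observation has only $O(\abs{a_1})$ boundary defects and the second reduces to $\bigl(\sign(k-a_1/2)+\sign(a_2-a_1/2-k)\bigr)e^{i(\psi(k)+\tau(k))}$, supported on $O(\abs{b})$ integers, giving total observation energy $O(\abs{a_1}+\abs{a_2})$ against a norm $\asymp 2N$. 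Both routes are valid: the paper's chain gives a sharper (constant, equal to $2$) observation energy and makes transparent exactly which two relations fail, whereas your phase-corrected sign function is a single closed-form family of initial data, avoids the recursive bookkeeping with the $\omega_k$, and unifies parts (i) and (ii) under one template (part (ii) being the degenerate case $C=0$, $\psi\equiv 0$). The energy bound $O(\abs{a_1}+\abs{a_2})$ rather than $O(1)$ is irrelevant for disproving the inequality since it is still uniform in $N$.
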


\begin{proof}
(i) Writing $d_k=c_ke^{i(k^2t_1+kx_1)}$ again, the equality $u(t_1+t,x_1-a_1t)=0$ for $t\in(0,T)$ together with \eqref{34} imply that $d_k=-d_{a_1-k}$
for every $k$. 
It follows that $|c_k|=|c_{a_1-k}|$ for every $k$.

Similarly,  $u(t_2+t,x_2-a_2t)=0$ for $t\in(0,T)$ implies $|c_k|=|c_{a_2-k}|$ for every $k$.
But then $|c_k|=|c_{a_2-k}|=|c_{a_1-(a_1-a_2+k)}|=|c_{a_1-a_2+k}|$, that is $|c_k|$
is $(a_1-a_2)$-periodic. As $|c_k|$ is square-summable, this can only happen if $c_k=0$ for every $k$.

For the second part, we construct a sequence $(u_{0,n})\subset L^2(0,2\pi)$ of initial data such that the corresponding solutions $u_n$ satisfy the relation
\begin{equation}\label{37}
\frac{1}{\norm{u_{0,n}}_2^2}\sum_{i=1}^2\int_0^T\abs{u_n(t_i+t,x_i-a_it)}^2\,\mbox{d}t \to 0.
\end{equation}

By the previous theorem the solutions satisfy the relation
\begin{equation*}
\sum_{i=1}^2\int_0^T\abs{u(t_i+t,x_i-a_it)}^2\,\mbox{d}t
\asymp \sum_{k\in\ZZ}^{\infty}\abs{d_k+d_{a_1-k}}^2+\abs{e_k+e_{a_2-k}}^2
\end{equation*}
with the notations
\begin{equation*}
d_k:=c_ke^{i(k^2t_1+kx_1)}
\qtq{and}
e_k:=c_ke^{i(k^2t_2+kx_2)}.
\end{equation*}
Assuming by symmetry that $a_2>a_1$, and setting  $p=a_2-a_1$ the relation may be rewritten in the form
\begin{equation*}
\sum_{i=1}^2\int_0^T\abs{u(t_i+t,x_i-a_it)}^2\,\mbox{d}t
\asymp \sum_{k\in\ZZ}^{\infty}\abs{d_k+d_{a_1-k}}^2+\abs{\omega_kd_k+d_{a_1-k+p}}^2
\end{equation*}
with suitable unimodular complex numbers $\omega_k$.

Fix an integer $q>\frac{a_1}{2}$.
For any fixed positive integer $n$ we define consecutively the following numbers $d_k$:
\begin{align*}
&d_q:=1,&&d_{a_1-q}:=- d_q,\\
&d_{q+p}:=-\omega_{a_1-q} d_{a_1-q},&&d_{a_1-q-p}:=-d_{q+p},\\
&d_{q+2p}:=-\omega_{a_1-q-p} d_{a_1-q-p},&&d_{a_1-q-2p}:=-d_{q+2p},\\
&\cdots\\
&d_{q+np}:=-\omega_{a_1-q-(n-1)p} d_{a_1-q-(n-1)p},&&d_{a_1-q-np}:=-d_{q+np}.
\end{align*}
Setting $d_k:=$ for all other indices, we obtain a trigonometric polynomial
\begin{equation*}
u_{0,n}(x)=\sum_{k\in\ZZ}c_ke^{ikx}:=\sum_{k\in\ZZ}d_ke^{-i(k^2t_1+kx_1)}e^{ikx}
\end{equation*}
satisfying 
\begin{equation*}
\norm{u_{0,n}}_2^2=2\pi\sum_{k\in\ZZ}\abs{c_k}^2=2n+2
\end{equation*}
and
\begin{equation*}
\sum_{k\in\ZZ}^{\infty}\abs{d_k+d_{a_1-k}}^2+\abs{\omega_kd_k+d_{a_1-k+p}}^2=2.
\end{equation*}
This proves \eqref{37}.

\medskip 

(ii) Note that in this part, $(x_1,t_1)$ is the same for each $a_1,\ldots,a_m$. 
We will take advantage of this to construct the sequence $d_k$.

Now we are looking for a sequence $(u_{0,n})\subset L^2(0,2\pi)$ of initial data such that the corresponding solutions $u_n$ satisfy the relation
\begin{equation*}
\frac{1}{\norm{u_{0,n}}_2^2}\sum_{i=1}^m\int_0^T\abs{u_n(t_1+t,x_1-a_it)}^2\,\mbox{d}t \to 0.
\end{equation*}
For any fixed positive integer $n$ we consider the  numbers
\begin{equation*}
d_k:=
\begin{cases}
\sgn k&\qtq{if}\abs{k}\le n,\\
0&\qtq{if}\abs{k}>n,
\end{cases}
\end{equation*}
and we define
\begin{equation*}
u_n(x):=\sum_{k=-n}^nc_ke^{ikx}
\qtq{with}
c_k:=d_ke^{-i(k^2t_1+kx_1)}.
\end{equation*} 
Then 
\begin{equation*}
\norm{u_{0,n}}^2_2=2\pi\sum_{k\in\ZZ}\abs{c_k}^2=2\pi\sum_{k\in\ZZ}\abs{d_k}^2=4n\pi.
\end{equation*}
On the other hand, using \eqref{34},
\begin{equation*}
\sum_{i=1}^m\int_0^T\abs{u(t_1+t,x_1-a_it)}^2\,\mbox{d}t\asymp\sum_{i=1}^m\sum_{k\in\ZZ}^{\infty}\abs{d_k+d_{a_i-k}}^2.
\end{equation*}
Therefore we will reach a contradiction if we bound
$\displaystyle \sum_{k\in\ZZ}^{\infty}\abs{d_k+d_{a_i-k}}^2$ independently of $n$.

Fix $i$ arbitrarily and write $a:=a_i$ for brevity.
The sequence $(d_k+d_{a-k})$ takes only the values $-2,-1,0,1,2$. 
It suffices to show that the number of $k$'s for which $d_k+d_{a-k}\ne 0$ is $\ll 1+\abs{a}$.
By the symmetry of the sequence $(d_k)$ it suffices to consider the values $1$ and $2$. 

We have 
\begin{equation*}
d_k+d_{a-k}=2\Longleftrightarrow 1\le k\le n\qtq{and}1\le a-k\le n\Longrightarrow 1\le k\le 1+a,
\end{equation*}
so that we have either no such $k$ if $a<0$ or at most $1+a$ such indices $k$ if $a>0$.

Next, we have $d_k+d_{a-k}=1$ in the following three cases:
\begin{align*}
&1\le k\le n\qtq{and}a-k\ge n+1\Longrightarrow 1\le k\le a-1-n;\\
&1\le k\le n\qtq{and}a-k=0;\\
&1\le k\le n\qtq{and}a-k\le -n-1\Longrightarrow a+1+n\le k\le n,
\end{align*}
and in three other symmetric cases by exchanging $k$ and $a-k$.

Since the first two cases above may only occur for $a>0$, while the third case only for $a<0$, at most 
\begin{equation*}
\max\set{(a-1-n)+1,-a}\le \abs{a}
\end{equation*} 
indices $k$ satisfy one of them.
We have the same upper bound for the three symmetric cases, so that there are at most $2\abs{a}$ indices $k$ for which $d_k+d_{a-k}=1$.
\end{proof}

We may also consider other boundary conditions.
Let us consider for example the Dirichlet condition:
\begin{equation}\label{38}
\begin{cases}
u_t+iu_{xx}=0&\mbox{in }\RR\times(0,\pi),\\
u(t,0)=u(t,\pi)=0&\mbox{for }t\in\RR,\\
u(0,x)=u_0(x)&\mbox{for }x\in (0,\pi).
\end{cases}
\end{equation}
The problem is well posed for every $u_0\in H_0^2(0,\pi)$.
Let us observe that extending an arbitrary solution of \eqref{38} to a $2\pi$-periodic odd function in the $x$ variable we obtain a solution of \eqref{31}.
Therefore Theorem \ref{t31} (i), (ii), (iii) and Theorem \ref{t32} (i) remain valid for the solutions of \eqref{38}.

The remaining parts were based on the construction of special solutions, so we need some additional arguments.
We have the following

\begin{proposition}\label{p34}
Fix $T>0$ arbitrarily, and consider the solutions of \eqref{38}. 

\begin{enumerate}[\upshape (i)]
\item  For any given  $(t_1,x_1)\in\RR^2$ and $a\in\ZZ$  there exist non-trivial solutions of \eqref{38} satisfying
\begin{equation*}
u(t_1+t,x_1-at)=0\qtq{for all}t\in\RR.
\end{equation*} 
\item The inverse inequality 
\begin{equation*}
\sum_{k\in\ZZ}\abs{c_k}^2\ll \sum_{i=1}^m\int_0^T\abs{u(t,-a_it)}^2\,\mbox{d}t 
\end{equation*}
fails for any $T>0$ and for any finite number of integers 
$a_1,\ldots, a_m$.
\end{enumerate}
\end{proposition}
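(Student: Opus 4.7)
The plan is to reduce \eqref{38} to the periodic problem \eqref{31} by extending each Dirichlet solution oddly in $x$ to a $2\pi$-periodic function, so that the Fourier coefficients satisfy $c_0 = 0$ and $c_{-k} = -c_k$. I would then revisit the coefficient constructions that underlie the proofs of Theorems \ref{t31}(iii) and \ref{t32}(ii) and check that they respect this oddness.

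Part (ii) should transfer almost verbatim from Theorem \ref{t32}(ii) with $(t_1, x_1) = (0, 0)$. The polynomial
\begin{equation*}
u_{0,n}(x) := \sum_{\abs{k}\le n} c_k e^{ikx},\qquad c_k := \sgn k,
\end{equation*}
already satisfies $c_0 = 0$ and $c_{-k} = -c_k$, hence defines a legitimate Dirichlet datum. The counting argument in the proof of Theorem \ref{t32}(ii) bounds $\sum_{i=1}^m \sum_{k\in\ZZ} \abs{d_k + d_{a_i - k}}^2$ by a constant depending only on $a_1, \ldots, a_m$, while $\norm{u_{0,n}}_{L^2}^2 \asymp n$ tends to infinity, and this contradicts the inverse inequality.

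For part (i), the identity \eqref{34} rewrites $u(t_1+t, x_1 - at)\equiv 0$ as $d_{a/2} = 0$ (when $a$ is even) together with $d_k + d_{a-k} = 0$ for every $k$, where $d_k = c_k e^{i(k^2 t_1 + kx_1)}$. A single pair $\set{k, a-k}$ of non-zero coefficients sufficed in the periodic case; in the Dirichlet case the oddness $c_{-k} = -c_k$ also places $-c_{a-k}$ at index $k-a$, so the additional orbits $\set{-k, a+k}$ and $\set{k-a, 2a-k}$ must satisfy the same cancellation. The plan is to pick an index $k \notin \set{0, \pm a/2}$, choose $c_k$ freely, and then determine the remaining coefficients recursively along the orbit of $k$ under the group generated by $j \mapsto a - j$ and $j \mapsto -j$, each step multiplying by a unit-modulus phase factor.

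The main obstacle I foresee is that for $a \ne 0$ this orbit is the infinite arithmetic progression $\set{\pm k + na\ :\ n\in\ZZ}$, and the recursion preserves modulus: one obtains $\abs{c_{k+na}} = \abs{c_k}$ for every $n$, which is incompatible with $(c_k)\in\ell^2$ unless $c_k = 0$. My fallback would therefore be to truncate the formal orbit sum at $\abs{n} \le N$, verify that the uncanceled boundary terms produce only a bounded $L^2$ trace on the observation line while $\norm{u_{0,N}}_{L^2}^2$ grows with $N$, and recover (i) in the same limiting "sequence" sense that is made explicit in (ii).
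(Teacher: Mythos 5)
Your part (ii) is correct and is exactly the paper's argument: since $(t_1,x_1)=(0,0)$, the sequence $c_k=\sgn k$ from the proof of Theorem \ref{t32}~(ii) already satisfies $c_0=0$ and $c_{-k}=-c_k$, hence defines admissible Dirichlet data, and the counting bound together with $\norm{u_{0,n}}_{L^2}^2\asymp n$ defeats the inverse inequality. Nothing more is needed there.

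For part (i) you have not proved the statement --- but the obstruction you identified is genuine, and it defeats the paper's own proof as well. The paper's construction is precisely the two--mode ansatz you consider first: it takes $u=c_k\sin(k^2t+kx)+c_{a-k}\sin((a-k)^2t+(a-k)x)$ and imposes only the cancellation $d_k+d_{a-k}=0$. However, the odd extension forces nonzero coefficients also at the indices $-k$ and $k-a$, and under the pairing $j\mapsto a-j$ dictated by \eqref{34} these must cancel against the (zero) coefficients at $a+k$ and $2a-k$; their frequencies $k^2+ak$ and $(k-a)(k-2a)$ on the observation line are distinct from $k^2-ak$ and from each other for the admissible $k$, so these two terms survive and the constructed function does \emph{not} satisfy $u(t_1+t,x_1-at)\equiv0$ when $a\ne0$. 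Your orbit argument then upgrades this from ``the construction fails'' to ``no construction can succeed'': vanishing on the segment forces $\abs{c_j}$ to be invariant under both $j\mapsto a-j$ and $j\mapsto -j$, hence constant on the infinite orbit $\set{\pm k+na\ :\ n\in\ZZ}$ when $a\ne0$, which is incompatible with square summability unless all $c_j=0$. (For $a=0$ the conditions reduce to $c_k\sin(kx_1)=0$, so nontrivial solutions exist only when $x_1\in\pi\QQ$.) Your fallback --- truncating the orbit and producing a sequence of quasi-solutions --- can only establish the failure of the one-segment inverse inequality, which is strictly weaker than the existence of an exact nontrivial solution vanishing on the line that (i) asserts. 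I would therefore not try to force (i) as written: state the orbit obstruction explicitly and either restrict (i) to the cases where it actually holds or replace it by the weaker sequential statement your truncation yields.
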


\begin{proof}
The solutions of \eqref{38}  are given by the series 
\begin{equation*}
u(t,x)=\sum_{k\in\ZZ}c_ke^{i(k^2t+kx)}
\end{equation*}
with suitable square summable complex coefficients $c_k$ satisfying the relations $c_k+c_{-k}=0$.
\medskip 

(i) We choose an integer $k$ for which the four numbers $k, a-k, -k, k-a$ are different that  and then two nonzero numbers $c_k, c_{a-k}$ satisfying the equality 
\begin{equation*}
c_ke^{i(k^2t_1+kx_1)}+c_{a-k}e^{i((a-k)^2t_1+(a-k)x_1)}=0.
\end{equation*}
Then the function 
\begin{align*}
u(t,x):&=c_k\sin(k^2t+kx)+c_{a-k}\sin((a-k)^2t+(a-k)x)\\
&=\left(\frac{c_k}{2i}e^{i(k^2t+kx)}+\frac{c_{a-k}}{2i}e^{i((a-k)^2t+(a-k)x)}\right)-\left(\frac{c_k}{2i}e^{i(k^2t-kx)}+\frac{c_{a-k}}{2i}e^{i((a-k)^2t-(a-k)x)}\right)
\end{align*}
has the required properties by the same arguments as in the proof of Theorem \ref{t31} (iv).
\medskip 

(ii) Since $(t_1,x_1)=(0,0)$ and therefore $d_k=c_k$ for all $k$, the sequences constructed in the proof of Theorem \ref{t32} (ii) define solutions of not only \eqref{31}, but also of \eqref{38}.
\end{proof}

\section{Beam equation}\label{s4}

We consider the one-dimensional linear beam equation with periodic boundary conditions:\begin{equation}\label{41}
\begin{cases}
u_{tt}+u_{xxxx}=0&\mbox{in }\RR\times(0,2\pi),\\
u(t,0)=u(t,2\pi)&\mbox{for }t\in\RR,\\
u_x(t,0)=u_x(t,2\pi)&\mbox{for }t\in\RR,\\
u(0,x)=u_0(x)&\mbox{for }x\in (0,2\pi),\\
u_t(0,x)=u_1(x)&\mbox{for }x\in (0,2\pi).
\end{cases}
\end{equation}
For any given initial data $u_0\in H^2_p$ and $u_1\in L^2$ there is a unique weak solution 
\begin{equation*}
u\in  C(\RR,H^2_p)\cap C^1(\RR,L^2).
\end{equation*}
Furthermore, $u$ has a Fourier series representation
\begin{equation}\label{42}
u(t,x)=c_0^++c_0^-t+\sum_{k\in\ZZ^*}\left(c_k^+e^{i(k^2t+kx)}+c_k^-e^{i(-k^2t+kx)}\right)
\end{equation}
with suitable square summable complex coefficients $c_k^+, c_k^-$ satisfying the relations 
\begin{equation*}
\sum_{k\in\ZZ}(1+k^4)(\abs{c_k^+}^2+\abs{c_k^-}^2)\asymp \norm{u_0}_{H^2_p}^2+\norm{u_1}_{L^2}^2.
\end{equation*}
Using \eqref{42} we extend the solutions to $\RR^2$ by $2\pi$-periodicity in $x$.

\begin{remark}\label{r41}
Observe that \eqref{42} is no longer a trigonometric series if $c_0^-\ne 0$.
However, the results and  proofs of this section remain valid in the general case by Remark \ref{r23}.
\end{remark}

First we consider the observation of the solutions on vertical line segments.
(Analogous theorems have been proved in \cite{Szijarto-Hegedus-2012} for the Klein--Gordon equations by a different approach.)

\begin{theorem}\label{t42}
Fix two distinct nonzero real numbers $t_1, t_2$, a number $T>0$, and consider the solutions of  \eqref{41}.

\begin{enumerate}[\upshape (i)]
\item The direct inequality
\begin{equation}\label{43}
\int_0^T|u(t_1,x)|^2+|u(t_2,x)|^2\,\mbox{d}x
\ll
\sum_{k\in\ZZ}(|c_k^+|^2+|c_k^-|^2)
\end{equation}
and the weakened inverse inequality
\begin{equation}\label{44}
|c_0^+|^2+|c_0^-|^2+\sum_{k\in\ZZ^*}\sin^2 k^2(t_1-t_2)(|c_k^+|^2+|c_k^-|^2)
\ll
\int_0^T|u(t_1,x)|^2+|u(t_2,x)|^2\,\mbox{d}x
\end{equation}
always hold.

\item If $(t_2-t_1)/\pi$ is irrational, then the right hand side of \eqref{44} does not vanish for any non-trivial solution.

\item If $(t_2-t_1)/\pi$ is rational, there exist non-trivial solutions for which the right hand side of \eqref{44} vanishes.

\item The inverse inequality
\begin{equation}\label{45}
\sum_{k\in\ZZ}\left(\abs{c_k^+}^2+\abs{c_k^-}^2\right)
\ll\int_0^T\abs{u(t_1,x)}^2+\abs{u(t_2,x)}^2\,\mbox{d}x 
\end{equation}
always fails.
\end{enumerate}
\end{theorem}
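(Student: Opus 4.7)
The core idea is to freeze time and expand spatially. For each $j\in\{1,2\}$, setting
\begin{equation*}
A_k^{(j)}:=c_k^+ e^{ik^2 t_j}+c_k^- e^{-ik^2 t_j}\quad(k\neq 0),\qquad A_0^{(j)}:=c_0^++c_0^- t_j,
\end{equation*}
the restriction $u(t_j,\cdot)$ is the Fourier series $\sum_k A_k^{(j)}e^{ikx}$. Since $\ZZ$ is uniformly separated with gap $1$, Ingham's Theorem A applied in the spatial variable yields $\int_0^T|u(t_j,x)|^2\,\mbox{d}x\asymp\sum_k|A_k^{(j)}|^2$ (the direct direction unconditionally, the inverse for $T$ large enough). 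The decisive observation is that the map $(c_k^+,c_k^-)\mapsto(A_k^{(1)},A_k^{(2)})$ is a $2\times 2$ linear system whose determinant equals $2i\sin(k^2(t_1-t_2))$ for $k\neq 0$; this determinant drives the entire dichotomy.

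For the direct inequality \eqref{43} I use $|A_k^{(j)}|^2\le 2(|c_k^+|^2+|c_k^-|^2)$ and Ingham's direct bound. For the weakened inverse \eqref{44}, Cramer's rule on the $2\times 2$ system gives
\begin{equation*}
\sin^2(k^2(t_1-t_2))(|c_k^+|^2+|c_k^-|^2)\ll|A_k^{(1)}|^2+|A_k^{(2)}|^2\qquad(k\neq 0),
\end{equation*}
while for $k=0$ the hypothesis $t_1\neq t_2$ makes the analogous linear system invertible, yielding $|c_0^+|^2+|c_0^-|^2\ll|A_0^{(1)}|^2+|A_0^{(2)}|^2$. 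Summing in $k$ and invoking Ingham's inverse inequality finishes \eqref{44}. Part (ii) is then immediate: irrationality of $(t_2-t_1)/\pi$ forces $\sin(k^2(t_1-t_2))\neq 0$ for every $k\neq 0$, so the LHS of \eqref{44} is strictly positive for any nontrivial solution, which by \eqref{44} forces the RHS to be positive as well.

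For (iii) I exploit the rational resonance. Writing $(t_2-t_1)/\pi=p/q$ and taking $k=q$ yields $\sin(k^2(t_2-t_1))=\sin(qp\pi)=0$. I then select nonzero $c_q^+,c_q^-$ satisfying $c_q^+ e^{iq^2 t_1}+c_q^- e^{-iq^2 t_1}=0$ and set all other coefficients to zero; this annihilates $u(t_1,\cdot)$ by construction, and a short calculation using $\sin(q^2(t_2-t_1))=0$ shows $u(t_2,\cdot)\equiv 0$ as well. The resulting nontrivial solution makes the right hand side of \eqref{44} vanish.

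Finally, in (iv), the rational case is handled by the solution from (iii), which has vanishing observation integral but positive $|c_q^+|^2+|c_q^-|^2$. For irrational $(t_2-t_1)/\pi$ I set $c_{k_n}^+:=1$, $c_{k_n}^-:=-e^{2ik_n^2 t_1}$, all other coefficients zero; then $u_n(t_1,\cdot)\equiv 0$ and $|u_n(t_2,x)|^2=4\sin^2(k_n^2(t_2-t_1))$, so
\begin{equation*}
\frac{\int_0^T|u_n(t_1,x)|^2+|u_n(t_2,x)|^2\,\mbox{d}x}{|c_{k_n}^+|^2+|c_{k_n}^-|^2}=2T\sin^2\bigl(k_n^2(t_2-t_1)\bigr).
\end{equation*}
Choosing $k_n\to\infty$ so that the right hand side tends to $0$ spoils \eqref{45}. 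This last choice requires the density of $\{k^2\alpha\bmod\pi\}$ for irrational $\alpha=(t_2-t_1)/\pi$, a classical theorem of Weyl on equidistribution of $k^2\alpha$; this number-theoretic input is the main obstacle of the proof, since all other steps reduce to linear algebra on the $2\times 2$ determinant and direct application of Ingham's theorem.
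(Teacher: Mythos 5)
Your proposal is correct and follows essentially the same route as the paper: freeze time, apply Ingham/Parseval in the spatial variable, and reduce everything to the $2\times 2$ system per frequency whose determinant is $2i\sin(k^2(t_1-t_2))$; your Cramer's-rule lower bound, your resonant solution in (iii), and your near-resonant test solutions in (iv) (resting on Weyl equidistribution of $k^2\alpha$ mod $1$, a fact the paper uses but does not name) all match the paper's argument. One remark: you rightly flag that the spatial inverse inequality requires $T$ large (in fact $T\ge 2\pi$, since the spatial frequencies are all of $\ZZ$ and Haraux's refinement does not enlarge that gap), a restriction that the paper's statement omits but that its proof also implicitly needs.
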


\begin{proof}
(i) Since
\begin{equation*}
u(t_j,x)=c_0^++c_0^-t_j+\sum_{k\in\ZZ^*}\left(c_k^+e^{ik^2t_j}+c_k^-e^{-ik^2t_j}\right)e^{ikx},
\end{equation*}
for $j=1,2$, applying Theorem B we get the relations
\begin{equation}\label{46}
\abs{c_0^++c_0^-t_j}^2+\sum_{k\in\ZZ^*}\abs{c_k^+e^{ik^2t_j}+c_k^-e^{-ik^2t_j}}^2
\asymp \int_0^T|u(t_j,x)|^2\,\mbox{d}x.
\end{equation}
They imply \eqref{43} by using the elementary inequality $\abs{a+b}^2\le2\abs{a}^2+2\abs{b}^2$.

The relations \eqref{44} follows by adding \eqref{46} for $j=1,2$, and using for each $k\in\ZZ^*$ the following estimates with $a=k^2t_1$ and $b=k^2t_2$:
\begin{align*}
\abs{xe^{ia}+ye^{-ia}}^2+\abs{xe^{ib}+ye^{-ib}}^2
&=\abs{xe^{2ia}+y}^2+\abs{xe^{2ib}+y}^2\\
&=2(\abs{x}^2+\abs{y}^2)+2\Re\left(x\overline{y}\left(e^{2ia}+e^{2ib}\right)\right)\\
&\ge (\abs{x}^2+\abs{y}^2)\left(2-\abs{e^{2ia}+e^{2ib}}\right)\\
&=(\abs{x}^2+\abs{y}^2)\frac{4-\abs{e^{2ia}+e^{2ib}}^2}{2+\abs{e^{2ia}+e^{2ib}}}\\
&=(\abs{x}^2+\abs{y}^2)\frac{4\sin^2(a-b)}{2+\abs{e^{2ia}+e^{2ib}}}\\
&\ge (\abs{x}^2+\abs{y}^2)\sin^2(a-b).
\end{align*}

(ii) If $(t_2-t_1)/\pi$ is irrational and the right side of \eqref{44} vanishes for some solution $u$, then we infer from \eqref{44} that all coefficients $c_k^{\pm}$ are equal to zero because $\sin^2 k^2(t_1-t_2)\ne 0$ for all $k\in\ZZ^*$, so that $u$ is the trivial solution.

(iii) If $(t_2-t_1)/\pi$ is rational, there exists a nonzero integer $k$ such that $k^2(t_1-t_2)$ is a multiple of $2\pi$. 
Then the formula
\begin{equation*}
u(t,x)=\left(e^{-ik^2t_1}e^{ik^2t}-e^{ik^2t_1}e^{-ik^2t}\right)e^{ikx}
\end{equation*}
defines a non-trivial solution of \eqref{41} such that $u(t_1,x)=u(t_2,x)=0$ for all $x\in\RR$.

(iv) It follows from \eqref{46} that the inverse inequality \eqref{45} holds if and only if the matrices 
\begin{equation*}
A_k:=
\begin{pmatrix}
e^{ik^2t_1}&e^{-ik^2t_1}\\
e^{ik^2t_2}&e^{-ik^2t_2}
\end{pmatrix}
\end{equation*}
are invertible, and the norms of their inverses are bounded by some uniform constant.

If $(t_2-t_1)/\pi$ is rational, then not all matrices $A_k$ are invertible by (iii).
Otherwise, by the irrationality there exists a sequence $(k_j)$ of positive integers such that $k_j^2(t_1-t_2)\to 0\mod 2\pi$, and then the above norms tend to $\infty$ as $j\to\infty$.
\end{proof}

Now we turn to the case of oblique segments.
Given a real number $a$, if $u$ is a solution of \eqref{41}, then a straightforward computation shows that 
\begin{equation}\label{47}
u(t_0+t,x_0-at) 
=d_0^++d_0^-t+\sum_{k\in\ZZ^*}\left(d_k^+e^{i(k^2-ak)t}+d_k^-e^{i(-k^2-ak)t}\right),
\end{equation}
where we use the notations
\begin{equation*}
d_0^+:=c_0^++c_0^-t_0,\quad 
d_0^-:=c_0^-
\end{equation*}
and
\begin{equation*}
d_k^+:=c_k^+e^{i(k^2t_0+kx_0)},\quad 
d_k^-:=c_k^-e^{i(-k^2t_0+kx_0)}
\qtq{for}k\in\ZZ^*.
\end{equation*}
Observe that
\begin{equation*}
\sum_{k\in\ZZ}\left(\abs{d_k^+}^2+\abs{d_k^-}^2\right)
\asymp \sum_{k\in\ZZ}\left(\abs{c_k^+}^2+\abs{c_k^-}^2\right).
\end{equation*}

In order to state our results we introduce the circle $S_a\subset\RR^2$  centered in $(a/2,-a/2)$ and passing through the origin.
Its cartesian equation is
\begin{equation*}
\left(x-\frac{a}{2}\right)^2+\left(y+\frac{a}{2}\right)^2=\frac{a^2}{2}
\qtq{or equivalently}
x^2-ax+y^2+ay=0.
\end{equation*} 
Furthermore, we introduce the set
$A_a=S_a\cap \ZZ^2\setminus\set{(0,0)}$.

\begin{remarks}\label{r43}\mbox{}
\begin{enumerate}[\upshape (i)]
\item Since the distance between distinct elements of $A_a$ is at least one, $A_a$ cannot have more elements than the perimeter of the circle $S_a$: $\abs{A_a}\le\sqrt{2}\pi a$.

\item If $a$ is irrational, then $A_a$ is empty.
Indeed, if $(k,m)\in A_a$, then $a=\frac{k^2+m^2}{k-m}\in\QQ$.

\item If $a$ is not an integer, then no element of $A_a$ has any zero coordinate, and hence $A_a=(\ZZ^*)^2\cap S_a$.
Indeed, if $(k,0)\in A_a$, then $a=k\in\ZZ$ from the above equation of $S_a$.
\end{enumerate}
\end{remarks}

\begin{theorem}\label{t44}
Fix $(t_1,x_1)\in\RR^2$, $a\in\RR$ and $T>0$ arbitrarily, and consider the solutions of \eqref{41}. 

\begin{enumerate}[\upshape (i)]
\item The direct inequality
\begin{equation*}
\int_0^T\abs{u(t_1+t,x_1-at)}^2\,\mbox{d}t\ll \sum_{k\in\ZZ}\left(\abs{c_k^+}^2+\abs{c_k^-}^2\right)
\end{equation*}
always holds.

\item If $a\ne 0$ and $A_a=\varnothing$, then the inverse inequality
\begin{equation*} 
\sum_{k\in\ZZ}\left(\abs{c_k^+}^2+\abs{c_k^-}^2\right)\ll \int_0^T\abs{u(t_1+t,x_1-at)}^2\,\mbox{d}t 
\end{equation*}
also holds.
In particular, the inverse inequality holds whenever $a$ is irrational.

\item If $a\in\ZZ$ or if $A_a\ne\varnothing$, then there exist non-trivial solutions satisfying
\begin{equation*}
u(t_1+t,x_1-at)=0\qtq{for all}t\in\RR,
\end{equation*} 
so that the inverse inequality in (ii) fails.
\end{enumerate}
\end{theorem}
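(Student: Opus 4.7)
The plan is to expand $t\mapsto u(t_1+t,x_1-at)$ via the Fourier representation \eqref{47} and reduce each claim to an Ingham-type statement (Theorems A, B together with Remarks \ref{r21} and \ref{r23}). The relevant frequency set is
\begin{equation*}
\Lambda:=\set{0}\cup\set{\lambda_k^+\ :\ k\in\ZZ^*}\cup\set{\lambda_k^-\ :\ k\in\ZZ^*},\qquad \lambda_k^\pm:=\pm k^2-ak,
\end{equation*}
where the frequency $0$ carries the additional polynomial term $d_0^++d_0^-t$, which is handled via Remark \ref{r23}.

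For part (i), as in the proof of Theorem \ref{t31}(i) each of the sets $\set{\lambda_k^+\ :\ k\in\ZZ^*}$ and $\set{\lambda_k^-\ :\ k\in\ZZ^*}$ splits into two monotone pieces (at the vertices $k=a/2$ and $k=-a/2$ respectively) whose consecutive gaps are at least $1$. Hence $\Lambda$ is a finite union of uniformly separated sets, and Remark \ref{r21} yields the direct inequality.

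For part (ii), I first observe that $A_a=\varnothing$ forces $a\notin\ZZ$, since for $a\in\ZZ^*$ the point $(a,0)$ always lies in $A_a$. I then claim that, setting $F_N:=\set{\lambda_k^\pm\ :\ 0<\abs{k}\le N}$, the uniform gap $\gamma(\Lambda\setminus F_N)$ tends to infinity as $N\to\infty$; combined with Theorem B and Remark \ref{r23} this delivers the inverse inequality for every $T>0$. The gap analysis splits into three cases. Within a single series, $\abs{\lambda_k^\pm-\lambda_m^\pm}=\abs{k-m}\abs{k+m\mp a}\ge d(a,\ZZ)\abs{k-m}$ (using $a\notin\ZZ$), and splitting into the two monotone halves as in the proof of Theorem \ref{t31}(ii) makes this diverge with $N$. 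The cross differences satisfy the key identity
\begin{equation*}
\lambda_k^+-\lambda_m^-=k^2+m^2-a(k-m)=(k-a/2)^2+(m+a/2)^2-a^2/2=:f(k,m),
\end{equation*}
so that $\lambda_k^+=\lambda_m^-$ holds precisely when $(k,m)\in A_a$; under the hypothesis $A_a=\varnothing$ the function $f$ does not vanish on $\ZZ^2\setminus\set{(0,0)}$, and since $S_a$ is a circle of fixed finite radius $\abs{a}/\sqrt 2$, for $\abs{k},\abs{m}>N$ we obtain $(k-a/2)^2+(m+a/2)^2\ge 2(N-\abs{a}/2)^2$, which forces $\abs{f(k,m)}\to\infty$. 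Finally, $\abs{\lambda_k^\pm}=\abs{k}\abs{k\mp a}\ge d(a,\ZZ)\abs{k}$ shows that gaps with the frequency $0$ also diverge.

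For part (iii), I build non-trivial null solutions by cancelling at a frequency collision. If $a\in\ZZ^*$ the identity $\lambda_k^+=\lambda_{a-k}^+$ (for $k\in\ZZ^*$ with $a-k\in\ZZ^*$ and $k\ne a/2$) lets me pick nonzero $d_k^+,d_{a-k}^+$ with $d_k^++d_{a-k}^+=0$ and set all other coefficients to zero; the case $a=0$ uses $\lambda_k^+=\lambda_{-k}^+$ identically. If $a\notin\ZZ$ and $A_a\ne\varnothing$, Remark \ref{r43}(iii) places any $(k,m)\in A_a$ in $(\ZZ^*)^2$, and the collision $\lambda_k^+=\lambda_m^-$ produces the required solution through the choice $d_k^++d_m^-=0$.

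The main obstacle is the cross-gap estimate in part (ii): the crucial step is the recognition of $\lambda_k^+-\lambda_m^-$ as the defect $f(k,m)$ measuring how far the lattice point $(k,m)$ sits from the bounded circle $S_a$, whereby the combinatorial hypothesis $A_a=\varnothing$ translates into the quantitative uniform separation estimate needed for Theorem B.
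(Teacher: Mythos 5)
Your proposal is correct and follows essentially the same route as the paper: the same reduction to the frequency set $\set{\pm k^2-ak : k\in\ZZ}$, the same identification of the cross-differences $(k^2-ak)-(-m^2-am)$ with the defect of the lattice point $(k,m)$ from the circle $S_a$, and the same cancellation constructions for part (iii). If anything, you are slightly more explicit than the paper in part (ii), where you verify that $\gamma(\Lambda\setminus F_N)\to\infty$ (rather than merely that $\Lambda$ is uniformly separated), which is exactly what Theorem B requires to cover arbitrarily small $T$.
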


\begin{remark}\label{r45}
Similarly to the Schr\"odinger equation, analogous results may be obtained for other boundary conditions; the details are left to the reader.
\end{remark}

\begin{proof}[Proof of Theorem \ref{t44}]
(i) The proof of Theorem \ref{t31} (i) shows that the exponents in \eqref{47} form a finite union of uniformly separated sets.
Hence the direct inequality holds by Theorem A and the remarks following Theorems A and B.
\medskip 

(ii) Since $(a,-a)\in A_a$ for all nonzero integers, $a\notin\ZZ$ by our assumptions, and therefore both sets
\begin{equation*}
\set{k^2-ak\ :\ k\in\ZZ}
\qtq{and}
\set{-k^2-ak\ :\ k\in\ZZ}
\end{equation*}
are uniformly discrete by the proof of Theorem \ref{t31} (ii).
In view of \eqref{47} we have to show that their union is also uniformly discrete.

This amounts to show that 
\begin{equation*}
\inf\set{\abs{(k^2-ak)-(-m^2-am)}\ :\ (k,m)\in\ZZ^2\setminus\set{(0,0)}}>0.
\end{equation*}
Since 
\begin{equation*}
(k^2-ak)-(-m^2-am)=\left(k-\frac{a}{2}\right)^2+\left(m+\frac{a}{2}\right)^2-\frac{a^2}{2},
\end{equation*}
this means that the circle $S_a$ has a positive distance from the set $\ZZ^2\setminus\set{(0,0)}$.
Since the latter set is discrete, this is satisfied by our assumption $A_a=\varnothing$.
\medskip 

(iii) If $a\in\ZZ$, then the function given in the proof of Theorem \ref{t31} (iii) also solves \eqref{41}.
Otherwise choose $(k,m)\in S_a\cap\ZZ^2\setminus\set{(0,0)}$, and set 
\begin{equation*}
u(t,x):=e^{i(-m^2t_1+mx_1)}e^{i(k^2t+kx)}-e^{i(k^2t_1+kx_1)}e^{i(-m^2t+mx)}.
\end{equation*}
Then $d_k^++d_m^-=0$ and therefore 
\begin{equation*}
u(t_0+t,x_0-at) 
=(d_k^++d_m^-)e^{i(k^2-ak)t}
=0
\end{equation*}
for all $t\in\RR$.
\end{proof}

Now let us investigate the inverse inequality 
\begin{equation}\label{48}
\sum_{k\in\ZZ}\left(\abs{c_k^+}^2+\abs{c_k^-}^2\right)\ll \int_0^T\abs{u(t_1+t,x_1-a_1t)}^2+\abs{u(t_2+t,x_2-a_2t)}^2\,\mbox{d}t
\end{equation}
when we observe the solutions on two segments.

We start with some simple observations.
We write $A_j$ instead of $A_{a_j}$ for brevity, and we denote by $A_j^+, A_j^-$ its projection on the first and second coordinate axis, respectively.

\begin{remarks}\label{r46}\mbox{}
\begin{enumerate}[\upshape (i)]
\item If $a_1\ne 0$ and $A_1=\varnothing$, then \eqref{48} holds by the preceding theorem. 
The same conclusion holds by symmetry if $a_2\ne 0$ and $A_2=\varnothing$.
\item The proof of Theorem \ref{t44} (ii) shows that 
\begin{equation}\label{49}
\sum_{k\in\ZZ\setminus A_j^+}\abs{d_k^+}^2+\sum_{m\in\ZZ\setminus A_j^-}\abs{d_m^-}^2
+\sum_{(k,m)\in A_j}\abs{d_k^++d_m^-}^2
\asymp \int_0^T\abs{u(t_j+t,x_j-a_jt)}^2\,\mbox{d}t
\end{equation}
for $j=1,2$.
If $A_1^+\cap A_2^+=A_1^-\cap A_2^-=\varnothing$, then adding these estimates for $j=1,2$ the inequality \eqref{48} follows.

\item On the other hand, if $a_1=a_2$ and $A_1\ne\varnothing$, then \eqref{48} fails.
Indeed, if $(k,m)\in A_1$ and $c\in\RR$, then changing $d_k^+$ and $+d_m^-$ to $d_k^++c$ and $+d_m^--c$ the right side of \eqref{49} remains unchanged.
\end{enumerate}
\end{remarks}

\begin{lemma}\label{l47}
If $a_1, a_2$ are different nonzero integers, then

\begin{enumerate}[\upshape (i)]
\item $A_1\cap A_2=\varnothing$;

\item if $(k,m),(k',m')$ are two different points in $A_1$ or $A_2$, then $k\ne k'$ and $m\ne m'$.
\end{enumerate}
\end{lemma}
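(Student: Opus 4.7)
The plan is to reduce both parts to direct arithmetic on the defining equation $x^2 - ax + y^2 + ay = 0$ of $S_a$, using only the integrality of $a_1,a_2$ and the fact that they are distinct and nonzero.

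For part (i), I would suppose $(k,m)\in A_1\cap A_2$ and subtract the two defining relations
\[
k^2 - a_1 k + m^2 + a_1 m = 0 \qtq{and} k^2 - a_2 k + m^2 + a_2 m = 0.
\]
The $k^2+m^2$ terms cancel and the difference factors as $(a_1 - a_2)(m - k) = 0$. Since $a_1\ne a_2$ this forces $k = m$; substituting back into either original equation reduces it to $2k^2 = 0$, so $(k,m) = (0,0)$, contradicting the exclusion of the origin from each $A_j$. This part of the argument is routine and should close in a few lines.

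For part (ii), I would take two distinct points $(k,m),(k',m')$ of the same $A_j$, write $a = a_j$, and split by which coordinate they share. Subtracting the two circle equations and dividing by the nonzero difference of the free coordinate gives the Vieta-type relations $m + m' = -a$ when $k = k'$, and $k + k' = a$ when $m = m'$. My plan is then to feed these back into the circle equation, conveniently rewritten in the symmetric form $(2k - a)^2 + (2m + a)^2 = 2a^2$, and to use the resulting description of lattice points on $S_a$ via the integer representations of $2a^2$ as a sum of two squares — together with the nonzero-integer hypothesis on $a_j$ — to force one of $(k,m),(k',m')$ to collapse to $(0,0)$, contradicting membership in $A_j$.

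The main obstacle I foresee is this last step of part (ii): unlike in (i), the Vieta relations alone admit many integer solutions, so the contradiction has to be extracted carefully from the joint arithmetic constraint on the sum-of-two-squares representations of $2a^2$. This is where I expect the technical core of the argument to lie and where the distinctness and nonzeroness assumptions on $a_1,a_2$ should play their decisive role; if the single-circle case analysis proves too delicate, I would fall back on combining the two constraints arising from $A_1$ and $A_2$ simultaneously, exploiting (i) to eliminate shared pairs.
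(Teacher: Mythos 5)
Your part (i) is correct and complete: subtracting the two circle equations gives $(a_1-a_2)(m-k)=0$, hence $k=m$, and substituting back gives $2k^2=0$, so $(k,m)=(0,0)$, which is excluded. This is an algebraic rendering of the paper's one-line geometric argument (all circles $S_a$ are tangent to the line $y=x$ at the origin, so two distinct ones meet only there); either version is fine.

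For part (ii), however, you have already reached the end of the intended proof without realizing it, and the continuation you plan cannot work. The Vieta relations $k+k'=a_j$ (when $m=m'$) and $m+m'=-a_j$ (when $k=k'$) are exactly what the paper derives, and the paper \emph{stops there}: the conclusion $a_j=k+k'\in\ZZ$ is meant to be the contradiction, which only makes sense if the standing hypothesis is that $a_j$ is \emph{not} an integer. The hypothesis ``different nonzero integers'' in the statement is evidently a slip (compare Remark \ref{r43} (iii) and the open question (iv) at the end of the paper, both of which concern non-integer rationals): as literally stated, part (ii) is false, since for any nonzero integer $a$ the points $(a,0)$ and $(a,-a)$ both satisfy $x^2-ax+y^2+ay=0$, are distinct, lie in $\ZZ^2\setminus\set{(0,0)}$, and share their first coordinate. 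Consequently no analysis of the representations of $2a^2$ as a sum of two squares can produce the contradiction you are after; the obstacle you correctly sense in the ``technical core'' is that the statement admits genuine counterexamples for integer $a$. Once the hypothesis is read as $a_1,a_2\notin\ZZ$ (with $a_1\ne a_2$ nonzero, so that the circles and the sets $A_j$ are still the relevant objects), your Vieta step finishes the proof in one line and no further arithmetic is needed.
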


\begin{proof}
(i) Since all circles $S_a$ have the same tangent line in the origin, $S_1\cap S_2=\set{(0,0)}$, and hence $A_1\cap A_2=\varnothing$.
\medskip

(ii) If for example $(k,m),(k',m)\in A_1$ with $k\ne k'$, then both $k$ and $k'$  solve the equation $x^2-a_1x+m^2+a_1m=0$, and hence $a_1=k+k'\in\ZZ$.
The other cases are similar.
\end{proof}

At this stage it is convenient to associate a graph $G(a_1,a_2)$ to a pair of distinct integers $a_1, a_2$. 
The vertices of this graph form the set $A_1\cup A_2$. 
Two vertices are adjacent if they have a common coordinate. 
The previous lemma then states that this graph is bipartite, namely a vertex in $A_1$ can only be adjacent to a vertex in $A_2$ and vice versa. 
A direct consequence of this is that a vertex has at most two neighbours.

A (simple) path is a sequence of distinct vertices $v_1,v_2,\ldots,v_n$ where $v_j,v_{j+1}$ are adjacent for every $j=1,\ldots,n-1$.
In particular, if the first coordinate is common in $v_j,v_{j+1}$ then $v_{j+1},v_{j+2}$ have the second coordinate in common, and vice versa.

A simple path has at most $|A_1|+|A_2|\leq \sqrt{2}\pi(a_1+a_2)$ elements.
Furthermore, every $v\in A_1\cup A_2$ belongs to a unique maximal simple path 
$v_{-\ell_1},\ldots, v_0,\ldots, v_{\ell_2}$ (see the figure).
This maximal path is called a \emph{cycle} if $v_{-\ell_1}$ and $v_{\ell_2}$ are adjacent, that is, they 
have a common component. 
Note that a cycle has necessarily an even number of points.

\begin{theorem}\label{t48}
Fix $(t_1,x_1),(t_2,x_2)\in \RR^2$, two different nonzero integers $a_1, a_2$ and $T>0$. 
The inverse inequality \eqref{48} fails if and only if $G(a_1,a_2)$ has a cycle.
\end{theorem}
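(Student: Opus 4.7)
The plan is to combine the two relations \eqref{49} (one for each $j=1,2$), so that the right-hand side of \eqref{48} becomes equivalent to a sum of \emph{direct} squares $\abs{d_k^+}^2$, $\abs{d_m^-}^2$ (one for each coordinate missing from $A_j^+$, resp.\ $A_j^-$) plus \emph{combined} squares $\abs{d_k^++d_m^-}^2$ over the vertices $(k,m)\in A_1\cup A_2$. The direct part already dominates $\abs{d_k^+}^2$ whenever $k\notin A_1^+\cap A_2^+$, and symmetrically for $d_m^-$; the only remaining issue is to control the \emph{problematic} squares indexed by $k\in A_1^+\cap A_2^+$ or $m\in A_1^-\cap A_2^-$ using the combined squares. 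By Lemma \ref{l47}, every vertex of $G(a_1,a_2)$ has degree at most two, so each connected component is either a simple path or a simple cycle; moreover the problematic coordinates attached to different components are disjoint, and the analysis reduces to one connected component at a time.

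Suppose first that $G(a_1,a_2)$ has no cycle, and fix a connected component, which is a simple path $v_{-\ell_1},\ldots,v_{\ell_2}$. Each endpoint $v$ has an \emph{outer} coordinate, the one it does not share with its unique neighbour in the path; by maximality of the path this coordinate appears in only one of $A_1,A_2$ and so is directly controlled. The combined square at $v$, together with the elementary bound $\abs{x}^2\le 2\abs{x+y}^2+2\abs{y}^2$, then controls the other coordinate of $v$. Propagating inward vertex by vertex along the path, each new combined square controls a new problematic variable once its neighbour is already controlled. By Remark \ref{r43} (i) the graph has at most $\sqrt 2\,\pi(\abs{a_1}+\abs{a_2})$ vertices, so the bootstrap terminates after a bounded number of steps with constants depending only on $a_1,a_2$, and \eqref{48} follows.

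Suppose conversely that $G(a_1,a_2)$ contains a cycle $v_1,v_2,\ldots,v_{2\ell}$ (necessarily of even length since the graph is bipartite). A connected component of a graph of maximum degree two that contains a cycle must coincide with that cycle, so no vertex outside the cycle is adjacent to a vertex of the cycle. Define $d_k^+:=1$ for each $k$ occurring as a first coordinate of a cycle vertex, $d_m^-:=-1$ for each $m$ occurring as a second coordinate, and $d_k^\pm:=0$ otherwise; the corresponding $c_k^\pm$ are read off through the unimodular factors defining the $d_k^\pm$. Every combined square $\abs{d_k^++d_m^-}^2$ at a vertex of $A_1\cup A_2$ vanishes --- either both coordinates belong to the cycle and it equals $\abs{1-1}^2=0$, or neither does and it equals $0$ --- and all direct squares vanish as well, while $\sum_k(\abs{c_k^+}^2+\abs{c_k^-}^2)=2\ell>0$. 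This non-trivial solution annihilates the right-hand side of \eqref{48}, so the inverse inequality fails.

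The main obstacle is the propagation step in the acyclic case, since the constant produced must be independent of the particular solution. Each step loses a factor of four through $\abs{x}^2\le 2\abs{x+y}^2+2\abs{y}^2$, so the constant grows exponentially with the length of the path being traversed; only the a priori bound on the size of $G(a_1,a_2)$ from Remark \ref{r43} (i) keeps this constant finite and turns the step-by-step bootstrap into a genuine inverse inequality with a uniform constant.
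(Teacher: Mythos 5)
Your proposal is correct and follows essentially the same route as the paper: the same reduction via \eqref{49} to the quadratic expression \eqref{410}, the same decomposition of $G(a_1,a_2)$ into maximal simple paths and cycles (with maximality anchoring the endpoints of each path), and the same alternating $\pm 1$ construction on a cycle for the converse. The only difference is one of packaging: the paper disposes of the uniformity of the constants by noting that the discrepancy between \eqref{410} and the left-hand side of \eqref{48} is a quadratic form in finitely many variables, so that definiteness alone suffices, whereas you track the constants explicitly through the bootstrap using the bound $\abs{A_1}+\abs{A_2}\le\sqrt{2}\,\pi(\abs{a_1}+\abs{a_2})$ --- both are valid.
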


\begin{proof}
Adding the relations \eqref{49} for $j=1,2$ we see that the right hand side of \eqref{48}
is
\begin{equation}
\asymp\sum_{k\in(\ZZ\setminus A_1^+)\cup (\ZZ\setminus A_2^+)}\abs{d_k^+}^2
+\sum_{m\in(\ZZ\setminus A_1^-)\cup (\ZZ\setminus A_2^-)}\abs{d_m^-}^2
+\sum_{(k,m)\in A_1\cup A_2}\abs{d_k^++d_m^-}^2.
\label{410}
\end{equation}

Since $A_1\cup A_2$ is finite, the difference between \eqref{410} and the
left hand side of \eqref{48} is a quadratic form in a finite number of variables.
Therefore  \eqref{48} is equivalent to the following uniqueness property: if the expression in \eqref{410} is zero, then all coefficients $d_k^\pm$ vanish.

Assume first that $G(a_1,a_2)$ has a cycle and write it as
\begin{equation*}
(k_1,m_1), (k_2,m_1), (k_2,m_2),\ldots, (k_n,m_{n-1}),(k_n,m_n), (k_1,m_n).
\end{equation*}
Up to exchanging $A_1$ and $A_2$ we may assume that $(k_1,m_1)\in A_1$, so that $(k_1,m_n)\in A_2$.
Note that $k_1,\ldots,k_{n-1}\in A_1^+\cup A_2^+$ and $m_1,\ldots,m_n\in A_1^-\cup A_2^-$.

Then setting $d_{k_i}^+=1$ and $d_{m_i}^-=-1$ for $i=1,\ldots,n$ and $d_k^+=d_m^-=0$ for all other indices $k,m$, the expression in \eqref{410} vanishes.

To prove the other direction, assume that $G(a_1,a_2)$ has no cycle, and consider an arbitrary maximal simple  path.
By symmetry between $a_1$ and $a_2$, we may assume that this path starts in $A_1$.
Depending on whether the first move is horizontal or vertical, and whether the path ends in $A_1$ or $A_2$, there are four possibilities:

\begin{itemize}
\item The first move is horizontal and the path ends in $A_2$, so that the path has the form
\begin{equation*}
(k_1,m_1),(k_2,m_1),(k_2,m_2),\ldots, (k_n,m_{n-1})
\end{equation*}
with $(k_1,m_1)\in A_1$ and $(k_n,m_{n-1})\in A_2$. 
Since the path is maximal, the only element of $A_2$ adjacent to $(k_1,m_1)$
is $(k_2,m_1)$; hence $k_1\notin A_2^+$. Similarly, $k_n\notin A_1^+$.

\item The first move is horizontal and the path ends in $A_1$: we have
\begin{equation*}
(k_1,m_1),(k_2,m_1),(k_2,m_2),\ldots, (k_n,m_{n-1}),(k_n,m_n)
\end{equation*}
with $(k_1,m_1)\in A_1$ and $(k_n,m_n)\in A_1$.
The maximality implies that $k_1\notin A_2^+$ and $m_n\notin A_2^-$.

\item The first move is vertical and the path ends in $A_2$: we have
\begin{equation*}
(k_1,m_1),(k_1,m_2),(k_2,m_2),\ldots, (k_{n-1},m_n)
\end{equation*}
with $(k_1,m_1)\in A_1$ and $(k_{n-1},m_n)\in A_2$. 
The maximality implies that $m_1\notin A_2^-$ and $m_n\notin A_1^-$

\item The first move is vertical and the path ends in  $A_1$: we have
\begin{equation*}
(k_1,m_1),(k_1,m_2),(k_2,m_2),\ldots, (k_{n-1},m_n),(k_n,m_n)
\end{equation*}
with $(k_1,m_1)\in A_1$ and $(k_n,m_n)\in A_1$.
The maximality implies that $m_1\notin A_2^-$ and $k_n\notin A_2^+$.
\end{itemize}

Let us consider the first case; the others are similar.
If the expression in \eqref{410} vanishes, then $d_k^\pm=0$ whenever $k\notin A_1^\pm$ or $k\notin A_2^\pm$, and
\begin{equation*}
\sum_{(k,m)\in A_1\cup A_2}\abs{d_k^++d_m^-}^2=0.
\end{equation*}
In particular, $d_{k_1}^+=0$, $d_{k_n}^+=0$ and
\begin{equation*}
|d_{k_1}^++d_{m_1}^-|^2+|d_{k_2}^++d_{m_1}^-|^2+\cdots
+|d_{k_n}^++d_{m_{n-1}}^-|^2=0,
\end{equation*}
whence $d_{k_j}^++d_{m_j}^-=d_{k_{j+1}}^++d_{m_j}^-=0$ for $j=1,\ldots,n-1$.
A direct induction then shows that $d_{k_j}^+=d_{m_j}^-=0$ for $j=1,\ldots,n-1$.

Finally, since every $(k,j)\in A_1\cup A_2$ belongs to a maximal simple  path, we conclude that 
$d_k^+=d_m^-=0$ for all $k,m$ as claimed.
\end{proof}

\begin{example}
The figure below shows the situation where $a_1=-\frac{13}{5}$
and $a_2=-\frac{10}{3}$.
There are two  maximal simple paths $KM$ and $K'M'$ having more than one element, and there are no cycles.
\end{example}

\begin{center}
\definecolor{qqqqff}{rgb}{0.,0.,1.}
\begin{tikzpicture}
\draw[->,color=black] (-4.3,0.) -- (3.16,0.);
\foreach \x in {-4,-3,-2,-1,1,2,3}
\draw[shift={(\x,0)},color=black] (0pt,2pt) -- (0pt,-2pt) node[below] {\footnotesize $\x$};
\draw[->,color=black] (0.,-2.58) -- (0.,5.3);
\foreach \y in {-2,-1,1,2,3,4,5}
\draw[shift={(0,\y)},color=black] (2pt,0pt) -- (-2pt,0pt) node[left] {\footnotesize $\y$};
\draw[color=black] (0pt,-10pt) node[right] {\footnotesize $0$};
\draw (-2,-2)-- (3,3);
\draw (-4,4)-- (2,-2);
\draw(-1.,1.) circle (1.4142135623730951cm);
\draw(0.56,-0.56) circle (0.7919595949289333cm);
\draw(-1.65,1.65) circle (2.3334523779156067cm);
\draw(-1.3,1.3) circle (1.8384776310850237cm);
\begin{scriptsize}
\draw [fill=qqqqff] (-3.,2.) circle (2.5pt);
\draw[color=qqqqff] (-3.16,2.26) node {$K$};
\draw [fill=qqqqff] (-2.,3.) circle (2.5pt);
\draw[color=qqqqff] (-1.86,3.36) node {$K'$};
\draw [fill=qqqqff] (-2.,4.) circle (2.5pt);
\draw[color=qqqqff] (-1.86,4.36) node {$M'$};
\draw [fill=qqqqff] (-4.,2.) circle (2.5pt);
\draw[color=qqqqff] (-3.86,2.36) node {$M$};
\end{scriptsize}
\end{tikzpicture}
\end{center}

We have no concrete example in which $G(a_1,a_2)$ has a cycle.
The following proposition indicates that if there exist such examples, they are rare.

\begin{proposition}\label{p49}\mbox{}
The graph $G(a_1,a_2)$ has no cycle in the following cases:
\begin{enumerate}[\upshape (i)]
\item $a_1$ and $a_2$ have opposite nonzero signs;
\item $a_1$ and $a_2$ have equal signs, and $a_1/a_2\ge 3/2$.
\end{enumerate}
\end{proposition}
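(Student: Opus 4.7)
My plan is to derive a contradiction from the assumed existence of a cycle by summing the defining circle equations around it. The approach works uniformly for any two distinct nonzero integers $a_1,a_2$ without any ratio hypothesis, so it delivers both parts of the proposition at once (and in fact strengthens part (ii), which is consistent with the authors' remark that they have no concrete example of a cycle).

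First I would fix a canonical form for the cycle. Since $G(a_1,a_2)$ is bipartite (Lemma \ref{l47}) and every vertex has at most two neighbors, one for each adjacency type, any simple cycle has even length $2n$, its vertices alternate between $A_1$ and $A_2$, and its edges alternate between the two adjacency types. Relabeling if necessary, I may write the cycle as
\begin{equation*}
(k_1,m_1),(k_1,m_2),(k_2,m_2),(k_2,m_3),\ldots,(k_n,m_n),(k_n,m_{n+1})
\end{equation*}
with $m_{n+1}=m_1$, where $(k_i,m_i)\in A_1$ and $(k_i,m_{i+1})\in A_2$ for $i=1,\ldots,n$ (swapping the roles of $a_1,a_2$ otherwise).

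Next I would use that $(k,m)\in A_a$ iff $k^2+m^2=a(k-m)$, which yields
\begin{equation*}
k_i^2+m_i^2=a_1(k_i-m_i)\qtq{and}k_i^2+m_{i+1}^2=a_2(k_i-m_{i+1})
\end{equation*}
for each $i$. Summing cyclically, and using $\sum m_{i+1}^2=\sum m_i^2$ together with $\sum m_{i+1}=\sum m_i$, I obtain the key identity
\begin{equation*}
\sum_{i=1}^n(k_i^2+m_i^2)=a_1S=a_2S,\qtq{where}S:=\sum_{i=1}^n k_i-\sum_{i=1}^n m_i.
\end{equation*}

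Case (i) then follows at once: when $a_1>0>a_2$, membership in $A_1$ forces $k_i-m_i>0$ so $S>0$, while membership in $A_2$ forces $k_i-m_{i+1}<0$ so $S<0$, a contradiction. For case (ii), I would apply the involution $(k,m)\mapsto(m,k)$, which bijects $A_a$ onto $A_{-a}$ and preserves the graph structure, to reduce to $a_1,a_2>0$; since $a_1\neq a_2$, the identity $a_1S=a_2S$ forces $S=0$, whence $\sum(k_i^2+m_i^2)=0$, and therefore every $(k_i,m_i)=(0,0)$, excluded from $A_1\cup A_2$. The main obstacle I anticipate is not in the short algebra above but in the combinatorial setup: carefully justifying the canonical cyclic form, in particular the alternation of edge types at each vertex and the alternation of $A_1,A_2$ along the cycle, both of which are consequences of the material preceding Lemma \ref{l47}.
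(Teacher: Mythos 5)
Your proof is correct, and it takes a genuinely different route from the paper's. The paper argues geometrically: it splits one of the circles into arcs using tangency points, shows that vertices on the ``bad'' arcs have at most one neighbour, and then shows that along the remaining arc each pair of steps moves strictly closer to the origin, so a path can never close up; this monotonicity is exactly what the sign hypothesis in (i) and the ratio hypothesis $a_1/a_2\ge 3/2$ in (ii) are designed to guarantee. Your argument instead sums the circle identities $k^2+m^2=a(k-m)$ around the cycle: the telescoping from $m_{n+1}=m_1$ gives $a_1S=a_2S=\sum(k_i^2+m_i^2)$ with $S=\sum k_i-\sum m_i$, forcing $S=0$ and hence every $(k_i,m_i)=(0,0)$, which is excluded. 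The combinatorial normal form you worry about is fine: bipartiteness and the fact that no two points of the same $A_j$ share a coordinate are exactly Lemma \ref{l47}, and they force the two cycle-edges at each vertex to be of different types, hence the alternating labelling (your case split and the involution $(k,m)\mapsto(m,k)$ are actually unnecessary, since $a_1\ne a_2$ already gives $S=0$ regardless of signs). What your approach buys is substantial: it needs no hypothesis on the signs or the ratio of $a_1,a_2$, so it proves that $G(a_1,a_2)$ \emph{never} has a cycle for distinct nonzero integers, which both subsumes Proposition \ref{p49} and, combined with Theorem \ref{t48}, shows that the inverse inequality \eqref{48} always holds in that setting --- addressing the paper's open problem about the existence of cycles. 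What the paper's approach buys in exchange is geometric insight into how maximal paths actually wind toward the origin, which is what underlies the example and figures, but as a proof of the stated proposition your argument is shorter, cleaner and strictly stronger.
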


\begin{proof}
(i) Without loss of generality, we may assume that $a\geq b>0$ and $a_1=-a$, $a_2=b$.

Let us start with the observation that, to belong to a cycle, a point $(k,m)\in S_{-a}$ 
must have two neighbours $(k,m'),(k',m)\in S_b$.

Set
\begin{equation*}
\tilde b=\left(\frac{1}{\sqrt{2}}-\frac{1}{2}\right)b=\frac{\sqrt{2}-1}{2}b,
\end{equation*}
and denote by $\tilde a_+$ and $\tilde a_-$  the positive and negative root of $x^2+ax+\tilde b^2-a\tilde b=0$,
that is
\begin{equation*}
\tilde a_+=\frac{-a+\sqrt{a^2+4a\tilde b-4\tilde b^2}}{2},
\quad \tilde a_-=\frac{-a-\sqrt{a^2-4a\tilde b-4\tilde b^2}}{2}.
\end{equation*}

Finally, let $P=(-\tilde b,\tilde a_-)$ (resp. $Q=(-\tilde a_-,\tilde b)$) be the point on
$S_{-a}$ such that the (vertical) (resp. horizontal)
line through $P$ and $(-\tilde b,-b/2)$ (resp. $Q$ and $(b/2,\tilde b)$) is tangent to $S_b$.

\begin{center}
\definecolor{qqqqff}{rgb}{0.,0.,1.}
\begin{tikzpicture}
\draw[->,color=black] (-4.3,0.) -- (3.16,0.);
\draw[->,color=black] (0.,-2.58) -- (0.,5.3);
\draw (-2,-2)-- (3,3);
\draw (-4,4)-- (2,-2);
\draw(1.,-1.) circle (1.4142135623730951cm);
\draw(-1.3,1.3) circle (1.8384776310850237cm);
\draw[-,color=black] (-0.41,-2) -- (-0.41,4.);
\draw[-,color=black] (-4,0.41) -- (3,0.41);
\begin{scriptsize}
\draw[color=black] (2.4,-2.1) node {$S_{b}$};
\draw[color=black] (-3.4,2.32) node {$S_{-a}$};
\draw[color=black] (-0.6,-0.25) node {$P$};
\draw[color=black] (0.25,0.6) node {$Q$};
\end{scriptsize}
\end{tikzpicture}\\
{\sc Figure:} Case (i)
\end{center}

Note that $P$ and $Q$ divide $S_{-a}$ into two arcs. Denote by $\Gamma^0$ the
one through $(0,0)$ and set
\begin{equation*}
\Gamma^0_+:=\Gamma^0\cap\{(x,y), x\geq 0,y\geq 0\}\qtq{and}
\Gamma^0_-:=\Gamma^0\cap\{(x,y), x\leq 0,y\leq 0\}.
\end{equation*} 

Now observe that a vertex of $G(-a,b)$ on $S_{-a}\setminus \Gamma^0$ has at most one neighbour, so that it cannot belong to a cycle.
Furthermore, a vertex of $G(-a,b)$ on $\Gamma^0_+$ has no neighbour on $\Gamma^0_-$.
Therefore, if there exists a cycle, then its
points on $S_{-a}$ should all belong to $\Gamma^0_+$, or should all belong to $\Gamma^0_-$.
Since the reflection of a cycle with respect to the anti-diagonal is also a cycle,
it remains to prove that there is no cycle all of whose points on $S_{-a}$ belong to $\Gamma^0_+$.

The geometric property of $\Gamma^0_+$ that we use is the following: consider the arc $\tilde \Gamma$
of $S_b$ joining $(0,0)$ to $(b/2,\tilde b)$. Then if we start at $(x,y)\in\Gamma^0_+$, draw a vertical line till we reach $\tilde \Gamma$ at some point $(x,y')$ and then draw an horizontal line $\ell$.
Then $\ell$ will intersect $S_{-a}$ at a point $(x',y')\in\Gamma^0_+$ with $0<x'<x$
(and at a second point $(x'',y')\notin\Gamma^0$). 

Now it is easy to see that $\Gamma^0_+$ contains no cycle.
Indeed, if $A_0=(k,l)\in G(-a,b)\cap \Gamma^0_+$ belonged to a cycle, then it would have  a neighbour of the form $A_1=(k,l')\in \tilde\Gamma$.
Then the second neighbour of $A_1$ would be of the form $(k',l')\in \Gamma^0_+$ with $0<k'<k$. 
Thus this path cannot return to $A_0$, a contradiction.
\medskip 

(ii) The condition $a\geq \frac{3}{2}b> 0$ was chosen so that the situation is exactly the same as previously. 
Again, $S_a$ splits into 2 arcs. 
On one of them the vertices
have no neighbours, and on the other arc after two steps we always get strictly closer to the origin.
\end{proof}

\begin{center}
\definecolor{qqqqff}{rgb}{0.,0.,1.}
\begin{tikzpicture}
\draw[color=black] (0pt,-10pt) node[right] {\footnotesize $0$};
\draw (-2,-2)-- (3,3);
\draw (-4,4)-- (2,-2);
\draw(-1.,1.) circle (1.4142135623730951cm);
\draw(-1.5,1.5) circle (2.12cm);
\draw[-,color=black] (-2.41,-2) -- (-2.41,4.);
\draw[-,color=black] (0.41,-2) -- (0.41,4.);
\draw[-,color=black] (-4,2.41) -- (3,2.41);
\draw[-,color=black] (-4,-0.41) -- (3,-0.41);
\end{tikzpicture}\\
{\sc Figure:} Case (ii)
\end{center}

\begin{example}
On the figure below none of the conditions (i) or (ii) is satisfied.
We do not know whether cycles can exist in this case. 
The larger cicle has now four arcs that could meet cycles.
\end{example}

\begin{center}
\definecolor{qqqqff}{rgb}{0.,0.,1.}
\begin{tikzpicture}
\draw[color=black] (0pt,-10pt) node[right] {\footnotesize $0$};
\draw (-2,-2)-- (3,3);
\draw (-4,4)-- (2,-2);
\draw(-1.1,1.1) circle (1.5556cm);
\draw(-1.3,1.3) circle (1.83cm);
\draw[-,color=black] (-2.655,-2) -- (-2.655,4.);
\draw[-,color=black] (0.455,-2) -- (0.455,4.);
\draw[-,color=black] (-4,2.655) -- (3,2.655);
\draw[-,color=black] (-4,-0.455) -- (3,-0.455);
\end{tikzpicture}\\
{\sc Figure:} A case where none of (i) and (ii) is satisfied 
\end{center}


\section{Rectangular plates}\label{s5}

We consider the vibrations of a rectangular plate with periodic boundary conditions.
More precisely, we consider the following system in $\Omega=(0,2\pi)\times(0,2\pi)$:

\begin{equation}\label{51}
\begin{cases}
u_{tt}+\Delta^2u=0&\mbox{in }\RR\times\Omega,\\
u(0,\cdot)=u_0&\mbox{in }\Omega,\\
u_t(0,\cdot)=u_1&\mbox{in }\Omega,\\
u(t,x,0)=u(t,x,2\pi)&\mbox{for }t\in\RR\mbox{ and }x\in (0,2\pi),\\
u_y(t,x,0)=u_y(t,x,2\pi)&\mbox{for }t\in\RR\mbox{ and }x\in (0,2\pi),\\
u(t,0,y)=u(t,2\pi,y)&\mbox{for }t\in\RR\mbox{ and }y\in (0,2\pi),\\
u_y(t,0,y)=u_y(t,2\pi,y)&\mbox{for }t\in\RR\mbox{ and }y\in (0,2\pi).
\end{cases}
\end{equation}
The results of this section may be extended to general rectangular domains by a simple linear change of variables.

Let us consider the orthonormal basis
\begin{equation*}
e_{k,\ell}(x,y)=\frac{1}{2\pi}e^{i(kx+\ell y)},
\quad (k,\ell)\in\ZZ^2
\end{equation*}
of $L^2(\Omega)$, and for any fixed real number $s$ let $D^s$ be the Hilbert space obtained by completion of the linear span of the functions $e_{k,\ell}$ with respect to the Euclidean norm
\begin{equation*}
\norm{\sum_{k,\ell}c_{k,\ell}e_{k,\ell}}_s
:=\left(\sum_{k,\ell}(1+k^2+\ell^2)^s\abs{c_{k,\ell}}^2\right)^{1/2}.
\end{equation*}

For any given initial data $u_0\in D^s$ and $u_1\in D^{s-2}$ there is a unique weak solution 
\begin{equation*}
u\in  C(\RR,D^s)\cap C^1(\RR,D^{s-2}).
\end{equation*}
Furthermore, $u$ has a Fourier series representation
\begin{equation}\label{52}
u(t,x)=c_{0,0}^++c_{0,0}^-t+\sum_{\substack{k,\ell\in\ZZ\\(k,\ell)\ne (0,0)}}\left(c_{k,\ell}^+e^{i[(k^2+\ell^2)t+kx+\ell y]}+c{k,\ell}^-e^{i[-(k^2+\ell^2)t+kx+\ell y]}\right)
\end{equation}
with suitable square summable complex coefficients $c_{k,\ell}^+, c_{k,\ell}^-$ satisfying the relations 
\begin{equation*}
\sum_{k,\ell\in\ZZ}(1+k^2+\ell^2)^s(\abs{c_{k,\ell}^+}^2+\abs{c_{k,\ell}^-}^2)\asymp \norm{u_0}_s^2+\norm{u_1}_{s-2}^2.
\end{equation*}
Using \eqref{42} we extend the solutions to $\RR^3$ (by $2\pi$-periodicity in $x$ and $y$).

We are interested in the observability of the solutions on a fixed segment of the form
\begin{equation*}
\set{(x_1+as,y_1+bs)\ :\ s\in (0,1)}
\end{equation*}
during some time interval $(t_1,t_1+T)$
with given real numbers $x_1, y_1, a,b,t_1,T$ satisfying $(a,b)\ne(0,0)$ and $T>0$.

\begin{theorem}\label{t51}
Given any $(a,b)\in\ZZ^2\setminus\set{(0,0)}$ and a real number $T>0$, the solutions of \eqref{51}
satisfy the estimates
\begin{equation*}
\norm{u_0}_0^2+\norm{u_1}_{-2}^2
\ll \int_0^T\int_0^1\abs{u(t_1+t,x_1+as, y_1+bs)}^2\,\mbox{d}s\,\mbox{d}t
\end{equation*}
for all $(u_0,u_1)\in D^1\times D^{-1}$.
\end{theorem}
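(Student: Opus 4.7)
The plan is to expand the restriction of $u$ to the observation segment as a two-dimensional nonharmonic Fourier series and then to invoke an Ingham-type inequality on the rectangle $(0,T)\times(0,1)$. Substituting $(t,x,y)=(t_1+t,x_1+as,y_1+bs)$ into \eqref{52} and absorbing the phases $e^{i(\pm(k^2+\ell^2)t_1+kx_1+\ell y_1)}$ into new coefficients $\beta^\eps_{k,\ell}$ with $|\beta^\eps_{k,\ell}|=|c^\eps_{k,\ell}|$ gives
\begin{equation*}
v(t,s):=u(t_1+t,x_1+as,y_1+bs)=c_{0,0}^++c_{0,0}^-(t_1+t)+\sum_{(k,\ell)\ne(0,0)}\sum_{\eps=\pm 1}\beta^\eps_{k,\ell}\, e^{i(\eps\phi_{k,\ell}t+\psi_{k,\ell}s)},
\end{equation*}
where $\phi_{k,\ell}:=k^2+\ell^2\in\N$ and, because $a,b\in\ZZ$, $\psi_{k,\ell}:=ka+\ell b\in\ZZ$. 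Evaluating \eqref{52} at $t=0$ and differentiating at $t=0$ yields $\norm{u_0}_0^2+\norm{u_1}_{-2}^2\asymp\sum_{\eps,k,\ell}|\beta^\eps_{k,\ell}|^2$, so it remains to bound this sum from above by the observation integral.

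I would then group $(k,\ell)$ according to the common value $n=\psi_{k,\ell}$. Setting $d=\gcd(a,b)$ and $(a,b)=d(a',b')$ with $\gcd(a',b')=1$, the line $L_n=\{(k,\ell)\in\ZZ^2:ka+\ell b=n\}$ is nonempty iff $d\mid n$, in which case $L_n=\{(k_0^{(n)}+jb',\,\ell_0^{(n)}-ja'):j\in\ZZ\}$. Along $L_n$ the quantity $\phi_{k,\ell}$ is a quadratic function of $j$ with leading coefficient $a'^2+b'^2\ge 1$, so the slice spectrum $\{\pm\phi_{k,\ell}:(k,\ell)\in L_n\}$ is uniformly discrete with gaps tending to $+\infty$. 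Theorem~B then yields a Haraux inverse inequality in $t$ on $(0,T)$ for arbitrary $T>0$ after deleting a finite subfamily from each slice. Since the $s$-frequencies $\psi_{k,\ell}$ are integers (uniformly separated with gap $\ge d$), a two-dimensional Ingham-type inequality on the rectangle $(0,T)\times(0,1)$---obtained by assembling the slicewise Haraux estimates and augmented by Remark~\ref{r41} to accommodate the linear term $c_{0,0}^-(t_1+t)$---produces
\begin{equation*}
\sum_{\eps,(k,\ell)}|\beta^\eps_{k,\ell}|^2\ll \int_0^T\int_0^1 |v(t,s)|^2\,\mbox{d}s\,\mbox{d}t.
\end{equation*}

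The main obstacle is precisely this final 2D assembly. The $s$-interval has length $1<2\pi$, so one-dimensional Ingham in $s$ cannot be applied directly; and the raw $t$-spectrum $\{\pm(k^2+\ell^2)\}$ contains pairs at distance $1$ arbitrarily far out (e.g.\ consecutive sums of two squares), so one-dimensional Haraux in $t$ alone cannot reach small $T$. The proof must exploit \emph{simultaneously} the integer separation in $s$ and the quadratic growth of the $t$-gaps within each $\psi$-slice. The technical heart is to verify that the slicewise Haraux constants assemble uniformly over $n\in d\ZZ$, and that in each slice the finitely many exceptional terms discarded by Haraux can be reincorporated---e.g.\ by a linear-independence argument for the resulting finite-dimensional family of 2D exponentials on the open rectangle $(0,T)\times(0,1)$, on which they are automatically uniformly independent.
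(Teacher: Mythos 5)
Your setup coincides with the paper's: restrict \eqref{52} to the segment, absorb the phases into coefficients of the same moduli, identify the frequency pairs $\bigl(\pm(k^2+\ell^2),\,ak+b\ell\bigr)$, and observe that the left-hand side is $\asymp\sum_{\eps,k,\ell}\abs{\beta^\eps_{k,\ell}}^2$. But the difficulty you correctly isolate at the end --- that one-dimensional Ingham in $s$ is unavailable (the interval has length $1<2\pi$) and Haraux in $t$ alone is unavailable (the values $k^2+\ell^2$ have bounded gaps globally), so the two separations must be exploited simultaneously --- is not resolved by your argument. The ``two-dimensional Ingham-type inequality on $(0,T)\times(0,1)$ obtained by assembling the slicewise Haraux estimates'' is precisely the statement to be proved, and you give no mechanism for the assembly: summing the slicewise estimates over $n$ presupposes that the slices can first be decoupled, which is exactly what the short $s$-interval prevents; the Haraux constants and the exceptional finite sets vary with the slice $n$; and your proposed finite-dimensional linear-independence argument handles only finitely many exceptional exponentials, whereas there are infinitely many slices. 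The central inequality is therefore asserted rather than derived.

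The paper closes this gap by an external input rather than by slicing: after the linear change of variables $m=ak+b\ell$, $n=bk-a\ell$ (so that $k^2+\ell^2=(m^2+n^2)/(a^2+b^2)$), the required estimate reduces to the Ingham-type inequality for the two-dimensional frequency set $\set{(m,\pm(m^2+n^2))\ :\ m,n\in\ZZ}$ on an arbitrarily small rectangle, which is a nontrivial theorem of Tenenbaum and Tucsnak \cite{Tenenbaum-Tucsnak-2009}. That citation is the real content of the proof; without it, or an equivalent self-contained argument, your proposal does not establish the theorem. A minor further point, common to both write-ups but worth flagging if you pursue this: distinct pairs $(k,\ell)$ can produce identical frequency pairs $\bigl(ak+b\ell,\pm(k^2+\ell^2)\bigr)$ (they correspond to $(m,n)$ and $(m,-n)$ after the change of variables), so an inverse inequality for the \emph{set} of frequencies only controls the modulus of the combined coefficient attached to each repeated frequency, not $\sum_{\eps,k,\ell}\abs{\beta^\eps_{k,\ell}}^2$ term by term; a complete proof has to address this degeneracy.
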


\begin{remark}\label{r52}
If $a=0$ or $b=0$, then the theorem holds for any nonzero value of the other coefficient by a reasoning similar to the proof of \cite[Theorem 1.3]{Komornik-Loreti-2014}.
\end{remark}

\begin{proof}
It is classical (see, e.g., \cite{Komornik-1994} and the standard trace theorems for Sobolev spaces) that if $(u_0,u_1)\in D^1\times D^{-1}$, then 
the right hand side is well defined.
We  may write $u(t_1+t,x_1+as, y_1+bs)$ in the form
\begin{multline*}
u(t_1+t,x_1+as, y_1+bs)
=d_{0,0}^++d_{0,0}^-t\\
+\sum_{\substack{k,\ell\in\ZZ\\(k,\ell)\ne (0,0)}}\left(d_{k,\ell}^+e^{i[(k^2+\ell^2)t+(ak+b\ell)s]}+d_{k,\ell}^-e^{i[-(k^2+\ell^2)t+(ak+b\ell)s]}\right)
\end{multline*}
with suitable complex coefficients satisfying the relations 
\begin{equation*}
\abs{d_{0,0}^+}^2+\abs{d_{0,0}^-}^2
\asymp \abs{c_{0,0}^+}^2+\abs{c_{0,0}^-}^2
\end{equation*}
and the equalities $\abs{d_{k,\ell}^\pm}=\abs{c_{k,\ell}^\pm}$ for all $(k,\ell)\ne(0,0)$.
Therefore the theorem will follow if we show that the set 
\begin{equation*}
\set{(ak+b\ell,\pm[k^2+\ell^2])\ :\ k,\ell\in\ZZ}
\end{equation*}
is uniformly separated.
Setting 
\begin{equation*}
m:=ak+b\ell,\quad n:=bk-a\ell
\end{equation*}
we have $(m,n)\in\ZZ^2$ and
\begin{equation*}
k^2+\ell^2=\frac{m^2+n^2}{a^2+b^2}.
\end{equation*}
Noticing that the linear map $(k,\ell)\mapsto (m,n)$ is invertible, it suffices to show that the set 
\begin{equation*}
\set{(m,\pm\frac{m^2+n^2}{a^2+b^2})\ :\ m,n\in\ZZ},
\end{equation*}
is uniformly separated.
This is equivalent to  the uniform separatedness of
\begin{equation*}
\set{(m,m^2+n^2)\ :\ m,n\in\ZZ},
\end{equation*}
and this was proved in \cite{Tenenbaum-Tucsnak-2009}.
\end{proof}

\section{Open problems}

We end this paper by a list of open questions:

\begin{enumerate}[\upshape (i)]
\item We do not know whether the inverse inequality in Theorem \ref{t32} (ii) fails for every choice of finitely many segments $(x_i,t_i)$ instead of only one segment.
\item Can we consider more general segments in Proposition \ref{p34} (ii)?
\item How to modify Proposition \ref{p34}
for Neumann boundary conditions?
\item We have no examples for a cycle in Theorem \ref{t48}.
Does the inverse inequality \eqref{48} of Theorem \ref{t48} hold for any two different non-integer rationals $a_1, a_2$?
\item If there are examples of cycles in the preceding question, then does there exist an integer $m\ge 3$ such that the inverse inequalities
\begin{equation*}
\sum_{k\in\ZZ}\left(\abs{c_k^+}^2+\abs{c_k^-}^2\right)\ll 
\sum_{i=1}^m\int_0^T\abs{u(t_i+t,x_i-a_it)}^2\,\mbox{d}t
\end{equation*}
hold for all choices of different nonzero integers $a_1,\ldots,a_m$?
\end{enumerate}

\end{document}